\newcommand{\comment}[1]{}
\def\fddto{\xrightarrow{\textit{f.d.d.}}}
\newcommand{\ind}{{\bf 1}}
\def\inddd#1{{\ind}_{\left\{#1\right\}}}
\newcommand{\proba}{\mathbb P}
\newcommand{\inv}{^{-1}}
\newcommand{\argmax}{\operatornamewithlimits{argmax}}
\newcommand{\eqnh}{\begin{eqnarray*}}
\newcommand{\eqne}{\end{eqnarray*}}
\newcommand{\eqnhn}{\begin{eqnarray}}
\newcommand{\eqnen}{\end{eqnarray}}
\newcommand{\equh}{\begin{equation}}
\newcommand{\eque}{\end{equation}}
\def\summ#1#2#3{\sum_{#1 = #2}^{#3}}
\def\prodd#1#2#3{\prod_{#1 = #2}^{#3}}
\newcommand{\eqd}{\stackrel{d}{=}}
\def\topp#1{^{(#1)}}
\def\abs#1{\left|#1\right|}
\def\sabs#1{|#1|}
\def\ccbb#1{\left\{#1\right\}}
\def\pp#1{\left(#1\right)}
\def\spp#1{(#1)}
\def\mmid{\;\middle\vert\;}
\def\vv#1{{\boldsymbol #1}}
\def\vvh{{\vv h}}
\def\vvc{{\vv c}}
\def\vvn{{\boldsymbol n}}
\def\vvb{{\vv b}}
\def\vvu{{\vv u}}
\def\vvv{{\vv v}}
\def\vvx{{\vv x}}
\def\vvz{{\vv z}}
\def\vvtau{{\vv \tau}}
\def\qmand{\quad\mbox{ and }\quad}
\def\qmwith{\quad\mbox{ with }\quad}
\def\mfa{\mbox{ for all }}
\def\mmas{\mbox{ as }}
\def\wt#1{\widetilde{#1}}
\def\wb#1{\overline{#1}}
\def\what#1{\widehat{#1}}
\def\limn{\lim_{n\to\infty}}
\def\weakto{\Rightarrow}
\def\R{{\mathbb R}}
\def\N{{\mathbb N}}
\def\B{{\mathbb B}}
\def\C{{\mathbb C}}
\def\calL{\mathcal L}
\def\calT{\mathcal T}
\def\H{{\mathsf H}}
\def\topp#1{^{(#1)}}
\def\d{{\rm d}}
\def\rmin{{\rm in}}
\def\rmout{{\rm out}}
\def\rmL{{\rm left}}
\def\rmR{{\rm right}}
\def\rmbr{{\rm br}}
\def\rmRe{{\rm Re}}
\newtheorem{Thm}{Theorem}[section]
\newtheorem{Lem}[Thm]{Lemma}
\newtheorem{Conj}[Thm]{Conjecture}
\theoremstyle{definition}
\newtheorem{Rem}[Thm]{Remark}
\numberwithin{equation}{section}
\def\step{{\rm step}}
\def\flat{{\rm flat}}
\def\GUE{{\rm GUE}}
\def\GOE{{\rm GOE}}
\def\dist{{\rm dist}}
\def\i{{\rm i}}
\def\Re{{\rmRe}}
\def\x{{\mathsf x}}
\def\h{{\mathsf h}}
\def\law{{\rm Law}}
\def\rx{{\mathsf x}}
\def\vvrx{{\vv\rx}}
\def\rh{{\mathsf h}}
\def\vvrh{{\vv\rh}}
\def\rf{\mathsf{f}}
\author{Zhipeng Liu}\address{Zhipeng Liu\\Department of Mathematics\\University of Kansas\\Lawrence, KS 66045, USA.}\email{zhipeng@ku.edu}
\author{Yizao Wang}\address{Yizao Wang\\Department of Mathematical Sciences\\University of Cincinnati\\2815 Commons Way\\Cincinnati, OH, 45221-0025, USA.}\email{yizao.wang@uc.edu}
\title[A conditional scaling limit of the KPZ fixed point]{A conditional scaling limit of the KPZ fixed point with height tending to infinity at one location}
\begin{document}\sloppy
\date{\today}
\begin{abstract}
We consider the asymptotic behavior of the KPZ fixed point $\{\H(x,t)\}_{x\in\R, t>0}$ conditioned on $\H(0,T)=L$ as $L$ goes to infinity.  The main result is a conditional limit theorem for the fluctuations of $\H$ in the region near the line segment
connecting the origin $(0,0)$ and $(0,T)$ for both step and flat initial conditions. The limit random field can be represented as a functional of two independent Brownian bridges, and in addition the limit random field depends also on the initial law of the KPZ fixed point. 
In particular for temporal fluctuations, the limit process indexed by line segment between $(0,0)$ and $(0,T)$, when the KPZ is with step initial condition, has the law of the minimum of two independent Brownian bridges; and when the KPZ is with flat initial condition the  limit process has the law of the minimum of two independent Brownian bridges, each in addition perturbed by a common Gaussian random variable. For spatial-temporal fluctuations, the conditional limit theorem sheds light on the asymptotic behaviors  of the point-to-point geodesic of the directed landscape conditioned on its length and as the length tends to infinity. 
\end{abstract}

\maketitle

\section{Introduction and main results}
\subsection{The model and main results}
The object of this paper is the so-called {\em Kardar--Parisi--Zhang (KPZ) fixed point}, a random two-dimensional field that arises from various models (random growth models, last passage percolations and directed polymers) that are loosely called the KPZ universality class (e.g.~\cite{baik99distribution,johansson00shape,johansson03discrete,borodin07fluctuations,tracy08integral,tracy09asymptotics,borodin14macdonald,matetski21KPZ,dauvergne18directed,quastel20convergence,Virag20heatandlandscape}). Throughout, we let $\{\H(x,t)\}_{x\in\R,t\ge 0}$ denote the KPZ fixed point, where $x$ and $t$ are the spatial and temporal parameters respectively. The field $\H$ also depends on the initial condition $\H(x,0)=\mathsf h_0(x)$ for some 
 function $\mathsf h_0$. The KPZ fixed point was conjectured to be the universal space-time limiting field of the height functions in all the models in the KPZ universality class,
 while its first characterization was obtained in \cite{matetski21KPZ} only recently. Therein, the authors defined $\H$ as the limit of the rescaled height function of an exactly solvable model in the KPZ universality class, the totally asymmetric simple exclusion process (TASEP), and then characterized $\H$ as the unique Markov process taking values in the space of upper-semicontinuous functions with explicit formula for transition probability.

In general, it is challenging to obtain explicit formulas for the distributions of $\H$. Such formulas are usually quite involved and obtained by exploiting the connections between certain exactly solvable models and the KPZ fixed point. Seminal works \cite{baik99distribution,johansson00shape} revealed the role of Tracy--Widom distributions for marginal (one-point) distributions. The finite-dimensional (multi-point) distribution  in the spatial direction (i.e., the law of $(\H(x_1,t),\dots,\H(x_d,t))$) for a fixed time point $t$) were obtained afterwards \cite{prahofer02scale,johansson03discrete,borodin07fluctuations,matetski21KPZ}. Most recently, the explicit formulas for joint cumulative distribution function of finite-dimensional distributions in general, 
with possibly different time points,
with were obtained in \cite{johansson17two} (two-dimensional) and \cite{johansson21multitime,liu22multipoint} (finite-dimensional). The latter two newly developed formulas in fact take quite different expressions, and a direct proof of their equivalence remains an interesting open question at this moment.

With explicit formulas at hand, it becomes now possible to carry out detailed analysis of path fluctuations of the KPZ fixed point. The main contribution of this paper is to characterize a local extremal behavior of the KPZ fixed point, using the recently developed formula \cite{liu22multipoint}.  More precisely, for some fixed $T>0$ we investigate the asymptotic joint behavior of $\{\H(x,t)\}_{x\in\R,t\in (0,T)}$ when the value of $\H(0,T)$ tends to infinity, established as convergence in distribution  with respect to the corresponding conditional law of $\H$.  Note that $\H$ depends on the initial condition $\H(x,0)=\mathsf h_0(x)$. We will focus on two specific initial conditions: the step initial condition $\mathsf h_0(x)=-\infty\inddd{x\ne 0}$, and the flat initial condition $\mathsf h_0(x)=0$. When needed, we will use $\H_\step$ and $\H_\flat$ to denote the KPZ fixed point with the step and flat initial conditions respectively.

Our main results are the following. We let $\fddto$ denote convergence of finite-dimensional distributions. 

 \begin{Thm}\label{thm:2}
 	For all $T>0$, as $L\to\infty$, 
 	\begin{multline}\label{eq:thm2}
 		\law\pp{\ccbb{\frac{\H(\frac{\rx T^{3/4}}{\sqrt 2 L^{1/4}},\tau T)-\tau \H(0,T)}{\sqrt 2 T^{1/4}L^{1/4}} }_{\rx\in\R,\tau\in(0,1)}\mmid  \H(0,T) = L}\\
 		\fddto \begin{cases}
 			\law\pp{\ccbb{\min\ccbb{\B_1^{\rmbr}(\tau)+\rx,  \B_2^{\rmbr}(\tau)-\rx}}_{\rx\in\R,\tau\in(0,1)}}, & \mbox{ if } \H = \H_\step,\\\\
 			\displaystyle \law\pp{\ccbb{\min\ccbb{\B_1^{\rmbr}(\tau)+\rx + \frac{1-\tau}{\sqrt 2} Z,  \B_2^{\rmbr}(\tau)-\rx - \frac{1-\tau}{\sqrt 2} Z}}_{\rx\in\R,\tau\in(0,1)}}, & \mbox{ if } \H = \H_\flat,
 		\end{cases}
 	\end{multline}
 	where on the right-hand side, $\{\B_1^\rmbr(\tau)\}_{\tau\in[0,1]}$ and $\{ \B_2^\rmbr(\tau)\}_{\tau\in[0,1]}$ denote two 
	i.i.d.~Brownian bridges over interval $[0,1]$, and  in the case with flat initial condition $Z$ is a standard normal random variable independent 
	from
	 $\B_1^\rmbr, \B_2^\rmbr$. 
 \end{Thm}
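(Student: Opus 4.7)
The plan is to derive the conditional finite-dimensional distribution from the explicit multi-point formulas of \cite{liu22multipoint} and then perform asymptotic analysis as $L\to\infty$. Fix test points $(\rx_i,\tau_i)\in\R\times(0,1)$ for $i=1,\dots,k$, and set $(x_i,t_i):=(\rx_i T^{3/4}/(\sqrt{2}L^{1/4}),\tau_i T)$. The formulas of \cite{liu22multipoint} furnish an explicit contour-integral expression for the joint CDF of $(\H(x_1,t_1),\dots,\H(x_k,t_k),\H(0,T))$. Differentiating in the last variable and dividing by the marginal density of $\H(0,T)$ at $L$---whose $L\to\infty$ asymptotics come from the right tail of the Tracy--Widom GUE law (step) or GOE law (flat)---should produce a tractable representation of the conditional joint CDF of the rescaled field.

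\textbf{Scaling and saddle-point analysis.} Next, I would substitute the rescaling $h_i=\tau_i L+\sqrt 2\,T^{1/4}L^{1/4}u_i$ into that representation and analyze the resulting integrals by steepest descent. In this regime the dominant factor $\exp\pp{-\tfrac43(L/T^{1/3})^{3/2}}$ appearing in the numerator is expected to cancel against the normalization coming from the marginal density, so that the limiting integrand stays finite. After locating the $L$-dependent saddle points and deforming contours accordingly, the integrand should factor, to leading order, into two pieces corresponding heuristically to the forward landscape $\calL(0,0;\cdot,\cdot)$ starting from the initial data and the backward landscape $\calL(\cdot,\cdot;0,T)$ ending at $(0,T)$; each piece should match the joint characteristic function of one of the independent Brownian bridges $\B_1^\rmbr,\B_2^\rmbr$. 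The $\min$ shape in the limit then arises from the two-sided variational characterization: conditioning on $\H(0,T)=L$ forces $\calL(0,0;x,\tau T)+\calL(x,\tau T;0,T)\le L$ to be nearly saturated along the geodesic, so that the fluctuations of $\H(x,\tau T)-\tau L$ are controlled by the binding one of the two independent Gaussian processes. For flat initial data, the extra freedom of optimizing over the starting point on the initial line contributes an additional Gaussian degree of freedom, which in the scaling limit becomes the common variable $Z$; the deterministic prefactor $(1-\tau)/\sqrt{2}$ reflects the parabolic profile of $\calL$ in the transverse direction, linearized at the relevant scale.

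\textbf{Main obstacle.} The principal difficulty will be the rigorous saddle-point analysis. The formulas of \cite{liu22multipoint} involve high-dimensional contour integrals whose saddles depend both on $L$ and on the test-point parameters; locating them, deforming contours, and establishing uniform-in-$L$ tail bounds to legitimize the exchange of limit and integration is expected to be delicate, and the two initial conditions will require parallel but separate treatments. A secondary, easier task is to match the resulting Gaussian expression with the joint law of $\min\ccbb{\B_1^\rmbr(\tau)+\rx,\B_2^\rmbr(\tau)-\rx}$ and its flat-case analog, which should reduce to a Laplace-transform identity or a reflection-principle computation for Brownian bridges.
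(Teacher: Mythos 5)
Your strategy matches the paper's: both start from the multi-point formula of \cite{liu22multipoint}, pass to the conditional tail by differentiating in $h_m$ and dividing by the Tracy--Widom GUE/GOE tail density, rescale $h_j=\tau_j L+\sqrt 2\,T^{1/4}L^{1/4}\rh_j$ and $x_j=\rx_jT^{3/4}/(\sqrt 2 L^{1/4})$, move the contours to neighborhoods of $\pm\sqrt L$ scaled by $L^{-1/4}$, and identify the limit with a Brownian-bridge probability. That part of the plan is sound and is essentially the paper's proof.

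Three points in your sketch need sharpening, however. First, the Liu formula is a \emph{double series} $\sum_{\vvn\in\N_0^m}\frac{1}{(\vvn!)^2}\what Q^{(\vvn)}$, and the bulk of the paper's technical work (Lemmas \ref{lem:1}--\ref{lem:3}) is the term-by-term analysis of this series: terms with some $n_j=0$ vanish by a residue argument, the $\vvn=(1,\dots,1)$ term carries the leading $e^{-\frac43L^{3/2}}$ asymptotic, and all remaining terms are shown to be $O(e^{-\frac{4(1+\epsilon')}{3}L^{3/2}})$ via Hadamard determinant bounds and a summability estimate in $\vvn$. Your ``uniform-in-$L$ tail bounds'' gesture at this but do not identify that the real issue is controlling a convergent sum of multi-contour integrals of growing complexity, not a single saddle-point estimate. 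Second, your factorization heuristic (forward/backward landscape $\rightarrow$ two independent bridges) is exactly what happens in the step case, where the limiting integral splits into a $\vvu$-integral times a $\vvv$-integral, each matched to a bridge tail probability via the contour identity of Lemma \ref{lem:bridge}; but for the flat case this factorization \emph{fails}: the antidiagonal $\delta$-function identifies $\xi^{(1)}=-\eta^{(1)}$, entangling the two integrals. The paper extracts the extra Gaussian $Z$ by rewriting the kernel $\phi_{2\tau_1}$ via the semigroup identity $\phi_{2\tau_1}(c_1-b_1)=\int\phi_{\tau_1}(z-b_1)\phi_{\tau_1}(c_1-z)\,dz$, which is a genuinely different mechanism than ``extra degree of freedom becoming a common variable.'' Third, the formula of \cite{liu22multipoint} requires \emph{distinct} time points; to recover convergence at possibly coincident times the paper proves an additional bootstrap lemma (Lemma \ref{lem:bootstrap}) exploiting the continuity of the limiting joint law. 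Your proposal does not address this and, as stated (fixing test points $(\rx_i,\tau_i)$ allowing repeats), the formula you start from would not directly apply.
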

\begin{Rem}\label{rem:right}
An alternative interpretation of the limit random field is as follows. For fixed $\tau$, we describe the function $\mathfrak f_\tau(\rx) := \min\ccbb{\B_1^{\rmbr}(\tau)+\rx,  \B_2^{\rmbr}(\tau)-\rx}$ as a {\em shifted downward right angle}. It is convenient to describe this function via its graph $(\rx,\mathfrak f_\tau(\rx))_{\rx\in\R}\subset\R^2$. Introduce the function $\mathfrak f^*(\rx) = -|\rx|$ as the `downward right angle with vertex at $(0,0)$' (again think of its graph). Then, we can relate the graph of the two functions via
\equh\label{eq:shift}
\pp{\rx,\mathfrak f_\tau(\rx)}_{\rx\in\R} = \pp{\rx,\mathfrak f^*(\rx)}_{\rx\in\R} + \pp{\mathsf v_1(\tau),\mathsf v_2(\tau)}\equiv \pp{\rx+\mathsf v_1(\tau),\mathfrak f^*(\rx)+\mathsf v_2(\tau)}_{\rx\in\R},
\eque
where the graph of $\mathfrak f^*$ is shifted to the vertex at
\begin{equation}\label{eq:vertex}
(\mathsf v_1(\tau),\mathsf v_2(\tau)) =  		\left(\frac{-\B_1^{\rmbr}(\tau)+\B_2^{\rmbr}(\tau)}{2},\frac{\B_1^{\rmbr}(\tau)+\B_2^{\rmbr}(\tau)}{2}\right).
	\end{equation}
As a bivariate process, the vertex process $\{\mathsf v_1(\tau),\mathsf v_2(\tau)\}_{\tau\in(0,1)}$ given by \eqref{eq:vertex} has the law of  a bivariate Brownian bridge:  each coordinate process is an independent Brownian bridge scaled by $1/\sqrt 2$.

Then, we can view the limiting random field as a moving downwards right angle, with the vertex process following the law of a bivariate Brownian bridge for the case of step initial condition. For the case of flat initial condition the limit random field can be interpreted similarly: if we set  $\mathfrak f_\tau(\rx) = \min\{\B^\rmbr_1(\tau)+\rx+(1-\tau)Z/\sqrt 2,\B^\rmbr_2(\tau)-\rx - (1-\tau)Z/\sqrt 2\}$ accordingly, then \eqref{eq:shift} remains to hold with the same $\mathfrak f^*$ and now the vertex process becomes
\[
\pp{\mathsf v_1(\tau),\mathsf v_2(\tau)} = 
		\left(\frac{-\B_1^{\rmbr}(\tau)+\B_2^{\rmbr}(\tau)}{2} - \frac{1-\tau}{\sqrt 2} Z,\frac{\B_1^{\rmbr}(\tau)+\B_2^{\rmbr}(\tau)}{2}\right).
\]
\end{Rem}

\begin{Rem}\label{rem:conditional law}
We understand 
\[
\proba(\cdot\mid \H(0,T) = L) := \lim_{\epsilon\downarrow0}\proba(\cdot\mid\H(0,T) \in (L-\epsilon,L+\epsilon)), 
\]
and the proof is based on the formula
\begin{multline*}
\proba(\H(x_1,t_1)>h_1,\dots,\H(x_{m-1},t_{m-1})>h_{m-1}\mid \H(x_m,t_m) = h_m) \\
:= \frac{\partial}{\partial h_m}\proba(\H(x_1,t_1)>h_1,\dots,\H(x_{m},t_{m})>h_{m})/\frac{\partial}{\partial h_m}\proba(\H(x_{m},t_{m})>h_{m}).
\end{multline*}
The marginal distribution of 
$\H_\step$ ($\H_\flat$ respectively) is the GUE (GOE resp.)~Tracy--Widom distribution, 
and the formula of 
joint cumulative distribution function we use
was obtained in \citep{liu22multipoint}.
\end{Rem}

\begin{Rem}
Note that our limit theorem does not include the endpoints $\tau=0,1$. We do not include $\tau=0$ as the random field $\H_\step$ is not continuous at $\tau = 0+$, and in fact the formula we work with does not include $\tau = 0$. 

	 We do not include $\tau = 1$ in the present work, however, for a different reason. There is in fact a phase transition at $\tau=1$ for the conditional limit theorem of our interest. 
Indeed, in an upcoming work \cite{nissim22asymptotics}, the authors proved that for fixed $\tau>1$, the conditional limit distribution of $\H_\step(\rx,\tau T)$, properly normalized, given $\H_\step(0,T) = L$ as $L\to\infty$ is the GUE Tracy--Widom distribution, again by exploiting the formula from \cite{liu22multipoint}. 

By exploiting the same formula, we expect that there is a different scaling limit when $\tau=\tau_L$ depends on $L$ and $\tau_L\to 0$ (or $1$) as $L\to\infty$. We leave the two cases for investigations in the future.
\end{Rem}

\begin{Rem}
For the step initial condition, using the following invariance property of the KPZ fixed point (see the skew stationary property of Lemma 10.2 in \cite{dauvergne18directed})
\[
\ccbb{\H_\step\left(x,t\right)}_{x\in\R,t>0}\stackrel{d}= \ccbb{\H_\step\left(x+\frac{X}{T}t,t\right)-\frac{1}{t}\left(\left(x+\frac{X}{T}t\right)^2-x^2\right)}_{ x\in\R, t>0}
\]
for any fixed $X\in\R$ and $T>0$, we have
\begin{multline*}
\law\pp{\ccbb{\frac{\H_\step(\frac{\rx T^{3/4}}{\sqrt 2 L^{1/4}}+\tau X,\tau T)-\tau \H_\step(X,T)}{\sqrt 2 T^{1/4}L^{1/4}} }_{\rx\in\R,\tau\in(0,1)}\mmid  \H_\step(X,T) = L}\\
\quad\fddto \law\pp{\ccbb{\min\ccbb{\B_1^{\rmbr}(\tau)+\rx,  \B_2^{\rmbr}(\tau)-\rx}}_{\rx\in\R,\tau\in(0,1)}},
\end{multline*}
as $L\to\infty$. 

For the flat initial condition, since $\{\H_\flat(x,t)\}_{x\in\R,t>0}$ has the same law as $\{\H_\flat(x+c,t)\}_{x\in\R,t>0}$ for any fixed constant $c\in\R$, our theorem implies
\begin{multline*}
\law\pp{\ccbb{\frac{\H_\flat(\frac{\rx T^{3/4}}{\sqrt 2 L^{1/4}}+ X,\tau T)-\tau \H_\flat(X,T)}{\sqrt 2 T^{1/4}L^{1/4}} }_{\rx\in\R,\tau\in(0,1)}\mmid  \H_\flat(X,T) = L}\\
\fddto \law\pp{\ccbb{\min\ccbb{\B_1^{\rmbr}(\tau)+\rx + \frac{1-\tau}{\sqrt 2}\cdot Z,  \B_2^{\rmbr}(\tau)-\rx - \frac{1-\tau}{\sqrt 2}\cdot Z}}_{\rx\in\R,\tau\in(0,1)}},
\end{multline*}
as $L\to\infty$. 
\end{Rem}
\begin{Rem}
We are not aware of any similar conditional (second order) scaling limit theorems, but we mention two relevant results on  conditional limit theorems on the (first order) shape by proving certain concentration phenomena.  The first is the recent work by \cite{ganguly22sharp}. Their interest comes from a different aspect, and they proved various interesting results.  The most relevant result to our setup is \cite[Theorem 1.9]{ganguly22sharp}, where in our setup they considered $\tau=1$ and  proved
\[
\proba\pp{\sup_{x\in[-L^{1/2},L^{1/2}]}\frac{\abs{\H_\step(x,T)-L+2|x|L^{1/2}}}{L^{1/4}}> M_L\mmid \H_\step(0,T) = L}\le  \exp\pp{-C_1M_L^2} 
\]
with $M_L\le C_2L^{3/4}$ for some constants $C_1,C_2>0$. 
Note that they are interested in a different scaling  in the space parameter $x$, {\em of order $L^{1/2}$ away from the line segment} between $(0,0)$ and $(0,T)$ of out interest,  and also their result concerns the case $\tau=1$ that we exclude in this paper.

Another relevant and recent result is due to \cite{lamarre21kpz}, who identified the first order {\em space-time} limit shape of the so-called {\em KPZ equation in the weak noise regime}, also conditioning on the value at a fixed location being large and tending to infinite. The model considered therein is loosely related to but not exactly the KPZ fixed point investigated here, and also they are interested in the shape away from the line segment as in \cite{ganguly22sharp}.

In principle, our methodology can yield second order limit fluctuations at the different scaling order considered in the two papers above,
 away from the nearby region of the line segment between $(0,0)$ and $(0,T)$,
 although the analysis could be more involved. We leave this task for a future work.
\end{Rem}

We searched for a simple explanation on why the limit involves the minimum of two independent Brownian bridges, but without success yet. Nevertheless, our result is consistent with, and actually provides insight to, some recent discoveries and conjectures on point-to-point geodesics of directed landscape, as we explain now. Actually, the previous Remark \ref{rem:right} on the moving downward right angle is inspired by this connection: compare with Conjecture \ref{conj:03} below.

\subsection{Comments and conjectures regarding directed landscape}
We discuss a few conjectures suggested by our Theorem \ref{thm:2}, and the most interesting conjectures concern the recently introduced directed landscape and its geodesics, to be introduced in a moment. Originally, a main motivation of the present work was to better understand the local extrema behavior around the so-called point-to-point geodesic, when its length tends to infinity.

We shall start with a quick remark on Theorem \ref{thm:2}.  Our theorem is in the sense of convergence of the finite-dimensional distributions, and a natural question is to enhance the convergence to one in the space of $C((0,1))\times C((0,1))$ ($C$ denotes the space of continuous functions defined on the given interval). The only missing piece is the tightness of the normalized processes with respect to the conditional laws. We do not know how to prove this, but expect it to hold.

To simplify the notation we introduce 
\[
\wb \H_{\step,L}(\rx,\tau) := \frac{\H_\step\pp{\frac{\rx T^{3/4}}{ 2 L^{1/4}},\tau T} - \tau L}{T^{1/4}L^{1/4}}.
\]
(The reason for the slightly different normalization as in Theorem \ref{thm:2} will become clear in \eqref{eq:sqrt 2} below.)
Then, we are interested in the {\em argmax process}  and the {\em max process}
\[
\ccbb{\argmax_{\rx \in\R}\wb\H_{\step,L}(\rx,\tau)}_{\tau\in(0,1)} \qmand \ccbb{\max_{\rx \in\R}\wb\H_{\step,L}(\rx,\tau)}_{\tau\in(0,1)},
\]
respectively. The motivation of considering these two processes shall be clear, once we introduce the directed landscape and its geodesics.

Recall that the KPZ fixed point has continuous sample path when restricted to $t>0$. The definition of the argmax process is delicate and actually nontrivial, as explained in the next remark. 
\begin{Rem}\label{rem:unique}
It has been known that for every fixed $\tau\in(0,1)$, almost surely the argmax of $\wb\H_\step(\cdot, \tau)$ exists and is unique \cite{corwin14brownian,moreno13endpoint,pimentel14location}. 
However, it has been recently shown \cite{corwin21exceptional,dauvergne22non} that almost surely, there exists a non-empty fractal subset say $\calT$ of $(0,1)$ so that for $\tau\in \calT$, the maximum of $\wb\H_\step(\cdot,\tau)$ is not achieved uniquely. 
Therefore, while the maximum is always achieved for all $\tau\in(0,1)$ (so we can write $\max_{\rx\in \R}$ instead of $\sup_{\rx\in\R}$), to define the argmax process jointly in $\tau$ would require some work. 

Instead, to define the argmax process (without conditioning), one could first define it via its finite-dimensional distributions (for every choice of fixed $\tau_1,\dots,\tau_d$ and they are known to exist uniquely as aforementioned), which form a consistent family. Then, this family in turn determines the argmax process as a random element in $D((0,1))$, the space of c\`adl\`ag functions \cite{billingsley99convergence}. We let $\{\argmax_{\rx\in\R}\wb\H_{\step,L}(\rx,\tau)\}_{\tau\in(0,1)}$ denote the so-defined process.

Now, to define the argmax process with respect to the conditional law, one should also be careful regarding the uniqueness issue. In view of the discussions above and Remark \ref{rem:conditional law}, the conditional law of argmax process given $\H_\step(0,T) \in(L-\epsilon,L+\epsilon)$ is again well-defined (first for fixed $\tau_1,\dots,\tau_d$ and then they determine the law on $D((0,1))$), and therefore taking the limit $\epsilon\downarrow0$ we obtain the conditional law of the argmax process given $\H_\step(0,T) = L$. Strictly speaking, we need to show there is a well-defined limit when $\epsilon\downarrow 0$. We expect this to hold.

\end{Rem}

Then, the following conjecture can be proved if one shows 
the tightness for Theorem \ref{thm:2} with step initial condition.

\begin{Conj}
	\label{conj:03}
	For all $T>0$, as $L\to\infty$, 
\begin{multline}
	\label{eq:aux817}
		\law\pp{\ccbb{\argmax_{\rx\in\R}  \wb\H_{\step,L}(\rx,\tau),\max_{\rx\in\R}\wb\H_{\step,L}(\rx,\tau)}_{\tau\in(0,1)} \mmid \H_\step(0,T)=L}
\\		\to 
	\displaystyle\law\pp{\ccbb{\B_1^\rmbr(\tau),\B_2^\rmbr(\tau)}_{\tau\in(0,1)}},  
\end{multline}
where $\B_1^\rmbr$ and $\B_2^\rmbr$ denote two independent Brownian bridges, and
where the convergence is 
in distribution
in the space of $D((0,1),\R)\times D((0,1),\R)$.
\end{Conj}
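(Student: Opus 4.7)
The plan is to derive Conjecture~\ref{conj:03} from the (hypothesized) process-level strengthening of Theorem~\ref{thm:2} by a continuous mapping argument for the argmax and max functionals. First, after matching the slightly different normalizations (the conjecture uses denominator $T^{1/4}L^{1/4}$ and spatial rescaling $T^{3/4}/(2L^{1/4})$ in place of the $\sqrt 2$-variants in Theorem~\ref{thm:2}), the pointwise conditional limit of $\wb\H_{\step,L}(\rx,\tau)$ is
\[
\Phi(\rx,\tau)\defe\min\ccbb{\sqrt 2\,\B_1^\rmbr(\tau)+\rx,\ \sqrt 2\,\B_2^\rmbr(\tau)-\rx}.
\]
For each fixed $\tau$, the function $\rx\mapsto\Phi(\rx,\tau)$ is a downward right angle with unique maximizer $(\B_2^\rmbr(\tau)-\B_1^\rmbr(\tau))/\sqrt 2$ and maximum value $(\B_1^\rmbr(\tau)+\B_2^\rmbr(\tau))/\sqrt 2$. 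Since the map $(x,y)\mapsto((y-x)/\sqrt 2,(y+x)/\sqrt 2)$ is orthogonal, the bivariate process $\pp{(\B_2^\rmbr-\B_1^\rmbr)/\sqrt 2,\,(\B_1^\rmbr+\B_2^\rmbr)/\sqrt 2}$ is itself a pair of independent standard Brownian bridges $(\wt\B_1,\wt\B_2)$, which is exactly the target on the right-hand side of~\eqref{eq:aux817}.

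The next step is a compactification in $\rx$: a tail estimate of the form
\[
\lim_{M\to\infty}\limsup_{L\to\infty}\proba\pp{\argmax_{\rx\in\R}\wb\H_{\step,L}(\rx,\tau)\notin[-M,M]\mmid\H_\step(0,T)=L}=0,
\]
uniform in $\tau$ on compact subintervals of $(0,1)$. The parabolic curvature of $\H_\step$ away from its peak makes this plausible, and such a bound should be extractable either from a direct one-point upper tail for $\wb\H_{\step,L}(\rx,\tau)$ at large $|\rx|$ derived from the formula of \cite{liu22multipoint} already in use, or by transferring the concentration estimate \cite[Theorem~1.9]{ganguly22sharp} to the present spatial scale via a monotone comparison. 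Once the compactification is in hand, the assumed process-level convergence on compact rectangles $[-M,M]\times[\tau_1,\tau_2]$, together with the almost-sure uniqueness of the argmax of $\Phi(\cdot,\tau)$ at each fixed $\tau$, lets us invoke the argmax continuous mapping theorem (in the form of \cite{pimentel14location} cited in Remark~\ref{rem:unique}): the joint law of $(\argmax_\rx,\max_\rx)$ of $\wb\H_{\step,L}(\cdot,\tau_i)$ converges, for any finite collection $\tau_1<\cdots<\tau_d$ in $(0,1)$, to that of $(\wt\B_1(\tau_i),\wt\B_2(\tau_i))$, yielding the convergence of finite-dimensional distributions claimed in~\eqref{eq:aux817}.

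To upgrade this to convergence in $D((0,1),\R)\times D((0,1),\R)$ one needs process-level tightness of the prelimit argmax and max. The max part is routine: $f\mapsto\sup_{\rx\in[-M,M]}f(\rx,\cdot)$ is continuous from $C([-M,M]\times(0,1))$ into $C((0,1))$, so the assumed tightness of $\wb\H_{\step,L}$ on compact rectangles, together with the compactification, directly transfers into tightness of the max process in $C((0,1))\subset D((0,1),\R)$. The main obstacle, in my view, is tightness of the argmax process in $D((0,1),\R)$: the argmax functional is not globally continuous, and by Remark~\ref{rem:unique} there is almost surely a non-empty fractal set $\calT\subset(0,1)$ of exceptional $\tau$'s where uniqueness of the maximizer of $\wb\H_{\step,L}(\cdot,\tau)$ fails even at the prelimit. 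The plan is to control, for each $\delta>0$, the modulus $\sup_{|\tau-\tau'|<\delta}\abs{\argmax_\rx\wb\H_{\step,L}(\rx,\tau)-\argmax_\rx\wb\H_{\step,L}(\rx,\tau')}$ in probability by exploiting a quantitative local curvature estimate for $\wb\H_{\step,L}$ near its peak --- the prelimit analog of the sharp $\pm 1$ slopes of $\Phi$ --- so that small $\tau$-perturbations can only produce small displacements of the maximizer. Proving such a curvature-at-the-peak bound uniformly in $L$ is the technical heart of the argument; once it is in place, the standard criterion that $D$-tightness with a continuous limit upgrades to uniform $C$-tightness closes the proof.
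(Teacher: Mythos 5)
You have correctly identified that this statement is labelled a \emph{Conjecture} in the paper, and the paper itself does not prove it: the text immediately preceding the conjecture says only that it ``can be proved if one shows the tightness for Theorem~\ref{thm:2} with step initial condition,'' and the display~\eqref{eq:sqrt 2} records the (correct) limit computation that would follow from the continuous mapping theorem under such a tightness upgrade. Your proposal takes essentially the same route. Your normalization match is correct: writing $\wb\H_{\step,L}(\rx,\tau)$ in terms of the normalization of Theorem~\ref{thm:2} gives the pointwise limit $\Phi(\rx,\tau)=\min\{\sqrt 2\,\B_1^\rmbr(\tau)+\rx,\ \sqrt 2\,\B_2^\rmbr(\tau)-\rx\}$, whose unique maximizer and maximum are $\pp{(\B_2^\rmbr-\B_1^\rmbr)/\sqrt 2,\ (\B_1^\rmbr+\B_2^\rmbr)/\sqrt 2}$, and the orthogonality of the rotation gives the pair of i.i.d.\ Brownian bridges; this is exactly~\eqref{eq:sqrt 2}.

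Where your proposal differs from the paper is that it spells out explicitly what the paper compresses into one sentence, and you are honest that each step remains open: (i) a uniform-in-$L$ tail bound localizing the conditional argmax to a compact $\rx$-window; (ii) the argmax continuous mapping theorem, using uniqueness of the maximizer of $\Phi(\cdot,\tau)$; and (iii) process-level tightness of the argmax process in $D((0,1),\R)$, despite the exceptional set of non-uniqueness times from Remark~\ref{rem:unique}. These are indeed the right obstructions, and since none of them is resolved in the paper either, the statement is correctly left as a conjecture; your write-up is therefore a sound conditional argument rather than a gap relative to the paper. One caution on step (i): the references you point to do not transfer directly. The bound \cite[Theorem 1.9]{ganguly22sharp} is at $\tau=1$ (which the paper excludes) and at the spatial scale $L^{1/2}$, whereas the relevant window here is $L^{-1/4}$; and the one-point formulas of \cite{liu22multipoint} would need a new asymptotic analysis for the conditional law at large $|\rx|$ rather than a simple citation. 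So the compactification is itself a nontrivial estimate, roughly of the same order of difficulty as the tightness of Theorem~\ref{thm:2} that you correctly identify as the main missing ingredient.
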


Indeed, applying the continuous mapping theorem to Theorem \ref{thm:2}, it would follow that the limit of the left-hand side of \eqref{eq:aux817} is 
\begin{multline}\label{eq:sqrt 2}
\ccbb{\argmax_{\rx\in\R}\min\ccbb{\sqrt 2\B^\rmbr_1(\tau)+\rx,\sqrt 2\B^\rmbr_2(\tau)-\rx}, \max_{\rx\in\R}\min\ccbb{\sqrt 2\B^\rmbr_1(\tau)+\rx,\sqrt 2\B^\rmbr_2(\tau)-\rx}}_{\tau\in(0,1)}\\
\stackrel{a.s.}=\ccbb{\frac{\B_2^\rmbr(\tau)-\B_1^\rmbr(\tau)}{\sqrt 2},\frac{\B_1^\rmbr(\tau)+\B_2^\rmbr(\tau)}{\sqrt 2}}_{\tau\in(0,1)},
\end{multline}
which has the claimed joint distribution. It is remarkable that the argmax process and max process, two dependent objects, are asymptotically conditionally independent when $L\to\infty$.

\bigskip

Conjecture~\ref{conj:03} sheds light on the behavior of a closely related object, the geodesics of directed landscape, under a similar rare event of our consideration. To understand the extremal behavior of geodesics of directed landscape \cite{dauvergne18directed} was in fact our main motivation of the current work, following a recent result by one of us \cite{liu22when}. 

We first review some 
background
 exclusively regarding the directed landscape, denoted by $\calL$. We then explain how our result and the Conjecture \ref{conj:03} above are relevant to $\calL$. The directed landscape was introduced as the random field arising in limit theorems for Brownian last passage percolation \cite{dauvergne18directed}. Since then it has been proved to be the limiting field for several other KPZ models \cite{dauvergne21scaling}. 
A directed landscape is a four-parameter random field
\[
\ccbb{\mathcal{L}(x,s;y,t)}_{\R^4_\uparrow}\qmwith \R^4_\uparrow:=\ccbb{(x,s; y,t)\in \R^4: s<t},
\]
with continuous sample path.
Then, for any $s<t$, and continuous function $\pi\in C([s,t])$, one can define the length of $\pi$ with respect to the directed landscape $\mathcal{L}$ (see, for example \cite{rahman21infinite}) as
\[
\ell_{s,t}(\pi):=\inf_{n\in\N}\inf_{s=t_0<\cdots<t_n = t} \sum_{i=1}^n \mathcal{L}(\pi(t_{i-1}),t_{i-1};\pi(t_i),t_i), \qmwith \pi=\{\pi(r)\}_{r\in [s,t]}\in C([s,t]).
\]

The {\em geodesic of the directed landscape} $\calL$ between two fixed points $(x,s)$ to $(y,t)$, with $(x,s;y,t)\in \R^4_\uparrow$, is a continuous path $\pi\in C([s,t])$, with $\pi(s)=x$ and $\pi(t)=y$,  which has the maximal length $\ell_{s,t}(\pi)$ with respect to $\mathcal{L}$. 
It has been proved that the geodesic exists and is unique almost surely \cite{dauvergne18directed}. {For such a geodesic $\pi$, we refer to $\{\calL(x,s;\pi(r),r)\}_{r\in[s,t]}$ as the directed landscape {\em along the geodesic $\pi$} between $(x,s)$ and $(y,t)$.}

From now on, we let 
\equh\label{eq:pi*}
\pi^* := \argmax_{\substack{\pi\in C([0,T])\\
\pi(0) = 0, \pi(T) = 0}}\ell_{0,T}(\pi)
\eque
denote the geodesic $\calL$ from $(0,0)$ and $(0,T)$. It was proved that in \cite{liu22when} when $\mathcal{L}(0,0;0,T)=L$ goes to infinity, the geodesic $\pi^*$ becomes very rigid and has fluctuations of order $L^{-1/4}$, and the directed landscape along the geodesic $\mathcal{L}(0,0;\pi^*(t),t)$ fluctuates of order $L^{1/4}$. 
Here we have the same issue regarding the definition of conditioning on the event $\calL(0,0;0,T) = L$, and again it is understood as $\proba(\cdot\mid \calL(0,0;0,T) = L) = \lim_{\epsilon\downarrow0}\proba(\cdot\mid\calL(0,0;0,T)\in(L-\epsilon,L+\epsilon))$. See \cite{liu22when}. 
(In \cite{liu22when}, an exact formula for the density function of $\pi^*(t)$ for a fixed $t$ without conditioning is known, and hence as $\epsilon\downarrow0$ it has a well-defined limit by examining the density formula. For our conjectures later, we need to assume implicitly $\pi^*(t)$ exists jointly for different $t_1,\dots,t_d$.)

More precisely, the marginal conditional convergence was established as follows.
\begin{Thm}[\cite{liu22when}]
	\label{thm:3}	
For any  $\tau\in(0,1),T>0$, as $L\to\infty$, 
	\begin{multline*}
		\law\pp{\pp{\frac{2L^{1/4}\pi^*(\tau T)}{T^{3/4}}, \quad \frac{\mathcal{L}(0,0;\pi^*(\tau T),\tau T)-\tau L}{T^{1/4}L^{1/4}}} \mmid \mathcal{L}(0,0;0,T)=L} \\
		\to 
	\law\pp{\pp{\sqrt{\tau(1-\tau)}Z_1,\sqrt{\tau(1-\tau)}Z_2}},
	\end{multline*}
where $Z_1$ and $Z_2$ are two independent standard Gaussian random variables.
\end{Thm}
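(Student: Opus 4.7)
The plan is to derive an exact formula for the joint density of the triple $\bigl(\pi^*(\tau T),\,\calL(0,0;\pi^*(\tau T),\tau T),\,\calL(0,0;0,T)\bigr)$ and then perform a steepest-descent asymptotic analysis as $L\to\infty$. I would begin with the metric-composition decomposition at time $\tau T$: setting $h_F(x):=\calL(0,0;x,\tau T)$ and $h_B(x):=\calL(x,\tau T;0,T)$, the spatial Markov property of the directed landscape across time $\tau T$ gives that $h_F$ and $h_B$ are independent, each marginally distributed as a KPZ fixed point with step initial condition (for $h_B$ after applying the time-reversal symmetry of $\calL$). Throughout I use the identities
\[
\calL(0,0;0,T)=\max_{x\in\R}\bigl[h_F(x)+h_B(x)\bigr],\qquad
\pi^*(\tau T)=\argmax_{x\in\R}\bigl[h_F(x)+h_B(x)\bigr],
\]
together with $\calL(0,0;\pi^*(\tau T),\tau T) = h_F(\pi^*(\tau T))$.

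Next, using the independence of $h_F$ and $h_B$ together with the finite-dimensional joint distribution formulas for $\H_\step$ at multiple space-time points from \cite{liu22multipoint}, one derives a contour-integral representation for the joint density of $\bigl(\pi^*(\tau T),\,\calL(0,0;\pi^*(\tau T),\tau T),\,\calL(0,0;0,T)\bigr)$ at a point $(y,h,L)$ (this is precisely the density formula alluded to in the paragraph preceding Theorem~\ref{thm:3}). Dividing by the marginal GUE Tracy--Widom density of $\calL(0,0;0,T)$ at $L$ gives the conditional density of $(\pi^*(\tau T),\,\calL(0,0;\pi^*(\tau T),\tau T))$ given $\calL(0,0;0,T)=L$. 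Theorem~\ref{thm:3} is then equivalent to showing that, under the rescaling $y = Y T^{3/4}/(2L^{1/4})$ and $h = \tau L + H T^{1/4}L^{1/4}$ and with the associated Jacobian from this change of variables, the rescaled conditional density converges as $L\to\infty$ to the bivariate centered Gaussian density with covariance $\tau(1-\tau)\,I_2$.

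The third step is the steepest-descent analysis of this ratio of densities. The contour integrals from \cite{liu22multipoint} produce exponential factors of order $\exp(-\tfrac{4}{3}L^{3/2})$ in $L$, which exactly match the large-deviation tail of the Tracy--Widom density so that the leading exponential contributions cancel between numerator and denominator. The Gaussian fluctuations in $(Y,H)$ then emerge from the quadratic expansion of the effective phase around its dominant saddle point; a direct Hessian computation is expected to yield the claimed covariance $\tau(1-\tau)I_2$, with the vanishing off-diagonal entry producing the independence of $Z_1$ and $Z_2$. This variance is in turn consistent with the Brownian-bridge-at-time-$\tau$ picture emphasized in Conjecture~\ref{conj:03}.

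The main obstacle is this final asymptotic step. The formulas from \cite{liu22multipoint} are nested contour integrals involving Fredholm determinants, so rigorously extracting the $L\to\infty$ asymptotics requires contour deformations onto the steepest-descent paths, a precise identification of the critical points of the phase, uniform estimates controlling the non-saddle portions of the contours, and enough pointwise control on the density to promote density convergence into convergence in distribution (as well as to make sense of the conditioning via $\lim_{\epsilon\downarrow0}\proba(\cdot\mid\calL(0,0;0,T)\in(L-\epsilon,L+\epsilon))$ for the joint triple, not just the marginal). This is precisely the technical content carried out in \cite{liu22when}.
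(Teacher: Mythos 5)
Note first that Theorem~\ref{thm:3} is not proved in the present paper at all; it is imported verbatim from \cite{liu22when}. Your high-level skeleton --- decompose $\calL(0,0;0,T)=\max_x\bigl[h_F(x)+h_B(x)\bigr]$ by the temporal Markov property at time $\tau T$, write an exact contour-integral expression for the density, rescale as $y=YT^{3/4}/(2L^{1/4})$, $h=\tau L+HT^{1/4}L^{1/4}$, divide by the $p_\GUE$ tail so that the leading $e^{-\frac43 L^{3/2}}$ cancels, and extract the Gaussian covariance $\tau(1-\tau)I_2$ from the Hessian at the saddle --- is the correct shape and matches what \cite{liu22when} does in outline; your scaling factors and the consistency check against Conjecture~\ref{conj:03} are also right.

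The genuine gap is in your second step. You assert that a contour-integral formula for the joint density of $\bigl(\pi^*(\tau T),\,\calL(0,0;\pi^*(\tau T),\tau T),\,\calL(0,0;0,T)\bigr)$ follows by combining the independence of $h_F,h_B$ with the finite-dimensional joint distribution formulas for $\H_\step$ from \cite{liu22multipoint}. But $\pi^*(\tau T)$ is an argmax over a \emph{continuum} of spatial locations, and finite-dimensional joint CDFs at finitely many space-time points do not by themselves yield the density of such an argmax, still less its joint density with the maximal value and with $h_F$ alone evaluated there. Promoting multi-point CDF formulas to an argmax density requires a discretize-and-pass-to-the-limit or Rice-type argument with genuinely new estimates; this is essentially the ``endpoint distribution'' problem of \cite{moreno13endpoint}, here further compounded by the joint conditioning structure. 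It is also not the route actually taken: the exact density formula used in \cite{liu22when} is imported from \cite{liu21one}, which derives it directly in the pre-limiting exponential last passage percolation model --- where the geodesic passes through lattice points and the density can be isolated by finite algebraic manipulations --- and then passes to the directed landscape via the known scaling limit. So while your decomposition and asymptotic strategy are sound, the step from the \cite{liu22multipoint} formulas to the geodesic density is neither routine nor the argument on record, and supplying it would be comparable in difficulty to reproving \cite{liu21one} itself.
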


Now, we relate the KPZ fixed point $\H$ to the directed landscape $\calL$. 
 Then the KPZ fixed point $\H(x,t)$ can be expressed as (see \cite[Corollary 4.2]{nica20one-sided})
\[
\H(x,t)=\sup_{y\in\R} \{\h_0(y) + \mathcal{L}(y,0;x,t)\}, \quad x\in\R, t\ge 0,
\]
where $\h_0(y)$ is the initial condition, and the above is understood as equal in distribution for two processes indexed by $t$ taking values in the space of upper-semicontinuous functions. 
In particular, with  the step initial condition $\h_0(y)=-\infty\cdot \inddd{y\ne 0}$, we have the following representation of $\H_\step$ in terms of $\calL$
\begin{equation}
\label{eq:relation_Hstep}
\ccbb{\H_\step(x,t)}_{x\in\R,t>0}\eqd \ccbb{\mathcal{L}(0,0;x,t)}_{x\in\R,t>0}.
\end{equation}
Note that we restrict to $t>0$ so the above is understood as equal in distribution for random elements in the space $C(\R\times(0,\infty))$. Similarly, $\eqd$ in the sequel stands for equal in distribution with respect to the corresponding space of continuous functions. Moreover, the conditional law of $\{\H_\step(x,t)\}_{x\in\R,t>0}$ given $\H_\step(0,T) = L$ is the same as the conditional law of $\{\calL(0,0;x,t)\}_{x\in\R,t>0}$ given $\calL(0,0;0,T) = L$.

Now, thanks to \eqref{eq:relation_Hstep},  
\[
\ccbb{\max_{\rx\in\R}\frac{\mathcal{L}(0,0;\frac{\rx T^{3/4}}{{2}L^{1/4}},\tau T)-\tau L}{T^{1/4}L^{1/4}}}_{\tau\in(0,1)} \eqd \ccbb{\max_{\rx\in\R}\frac{\H_\step(\frac{\rx T^{3/4}}{{2}L^{1/4}},\tau T)-\tau L}{T^{1/4}L^{1/4}}}_{\tau\in(0,1)}, 
\]
Conjecture \ref{conj:03} says that the left-hand side above converges in distribution to a Brownian bridge, with respect to the conditional law given $\calL(0,0;0,T) = L$, as $L\to\infty$. 

As we argued in Remark \ref{rem:unique}, 
the argmax process of $\H_\step$ with respect to the conditional law
is expected to exist uniquely. Therefore, so is the corresponding argmax process for $\calL$. 
Set $\rx^*(\tau T) = \argmax_{\rx \in\R} \calL(0,0;\frac{\rx T^{3/4}}{2L^{1/4}},\tau T)$. So we now know 
\[
\frac{\calL(0,0;\rx^*(\tau T)\frac{T^{3/4}}{2L^{1/4}},\tau T)-\tau L}{T^{1/4}L^{1/4}}\qmand \frac{\calL(0,0;\pi^*(\tau T),\tau T) - \tau L}{T^{1/4}L^{1/4}} 
\]
 have the same conditional limit distribution (for the convergence of the second, recall Theorem \ref{thm:3}). In view of the uniqueness of the argmax process, it is plausible to expect the following.

\begin{Conj}
	\label{conj:01} For every $\tau\in(0,1), T>0$, conditionally given $\mathcal{L}(0,0;0,T)= L$, as $L\to\infty$,
\[
		\frac{2L^{1/4}\pi^*(\tau T)}{T^{3/4}} -\argmax_{\rx\in\R}  \calL\pp{0,0;\frac{\rx T^{3/4}}{2L^{1/4}},\tau T}\to 0,
\]
	in probability. 
\end{Conj}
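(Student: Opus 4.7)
The plan is to use the metric composition law of the directed landscape. Set $A(y)\defe\calL(0,0;y,\tau T)$ and $B(y)\defe\calL(y,\tau T;0,T)$, so that $\calL(0,0;0,T)=\sup_{y\in\R}\{A(y)+B(y)\}=L$ with unique maximizer $\pi^*(\tau T)$ almost surely. By \eqref{eq:relation_Hstep}, the quantity $\argmax_\rx\calL(0,0;\rx T^{3/4}/(2L^{1/4}),\tau T)$ is the rescaled argmax of $A$ alone, so the conjecture asserts that, under the conditional law $\proba(\cdot\mid\calL(0,0;0,T)=L)$, the rescaled argmax of $A+B$ and the rescaled argmax of $A$ coincide asymptotically.

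By Theorem~\ref{thm:2} together with the tightness anticipated in Conjecture~\ref{conj:03}, the rescaled $A$ converges under the conditional law to the right-angle process $A_\infty(\rx)\defe\min\ccbb{\B_1^\rmbr(\tau)+\rx,\B_2^\rmbr(\tau)-\rx}$ with argmax $v_A\defe(\B_2^\rmbr(\tau)-\B_1^\rmbr(\tau))/2$. By the time-reversal symmetry of $\calL$, the analogous conditional scaling limit for $B$ should yield a right-angle process $B_\infty(\rx)\defe\min\ccbb{\B_3^\rmbr(1-\tau)+\rx,\B_4^\rmbr(1-\tau)-\rx}$ with argmax $v_B\defe(\B_4^\rmbr(1-\tau)-\B_3^\rmbr(1-\tau))/2$, jointly with $A_\infty$ under some coupling of the four Brownian bridges. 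A direct computation (rewriting $A_\infty+B_\infty$ as the minimum of four affine functions with slopes $\{+2,0,0,-2\}$) shows that $A_\infty+B_\infty$ is concave and piecewise affine, with its argmax set equal to the closed interval with endpoints $v_A$ and $v_B$.

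The almost sure uniqueness of the prelimit geodesic $\pi^*(\tau T)$ must pass to the limit and force this argmax interval to collapse to a single point, i.e.\ $v_A=v_B$ almost surely in the joint limit. Consequently $\argmax A_\infty=\argmax(A_\infty+B_\infty)$, and combined with the joint weak convergence this yields the claimed convergence in probability. As a consistency check, by Theorem~\ref{thm:3} and Conjecture~\ref{conj:03} the marginal limits of the rescaled $\pi^*(\tau T)$ and of the rescaled $\argmax A$ are both $\sqrt{\tau(1-\tau)}\,\calN(0,1)$, compatible with the proposed joint identification $v_A=v_B=\argmax(A_\infty+B_\infty)$.

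The principal obstacle is the rigorous establishment of the joint conditional scaling limit for $(A,B)$. Theorem~\ref{thm:2} handles only queries of $\calL$ sharing a common starting point; obtaining the joint limit with $B$ requires an extension of the multi-point formula of \cite{liu22multipoint} to simultaneously accommodate queries with a common starting point and queries with a common ending point. A secondary, more technical task is the propagation of the a.s.\ uniqueness of $\pi^*(\tau T)$ from the prelimit to the limit, since the limiting $A_\infty+B_\infty$ a priori admits a plateau; this should follow from tightness of the argmax functional together with soft lower-semicontinuity arguments.
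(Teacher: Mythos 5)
Conjecture \ref{conj:01} is stated in the paper without a proof; the surrounding text only offers a heuristic: the rescaled quantities $\max_{\rx}\calL(0,0;\rx T^{3/4}/(2L^{1/4}),\tau T)$ and $\calL(0,0;\pi^*(\tau T),\tau T)$ have the same conditional limit distribution (via Conjecture \ref{conj:03} and Theorem \ref{thm:3} respectively), and since the argmax of the limiting downward right-angle process is a.s.\ unique, it is \emph{plausible} that the corresponding locations coincide asymptotically. Your route via the metric composition $\calL(0,0;0,T)=\sup_y\{A(y)+B(y)\}$, with $A(y)=\calL(0,0;y,\tau T)$ and $B(y)=\calL(y,\tau T;0,T)$, is genuinely different and conceptually sharper. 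Your algebra is correct: $A_\infty+B_\infty$ is concave piecewise affine with argmax set the closed interval between $v_A$ and $v_B$. You also correctly identify that implementing this route would require extending the multi-point formula of \cite{liu22multipoint} to simultaneous common-start and common-end queries; that is a real and useful observation about the missing analytic input, one the paper's heuristic sidesteps by leaning on Theorem \ref{thm:3} from \cite{liu22when}.

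The genuine gap is the step you describe as secondary: that a.s.\ uniqueness of the prelimit maximizer ``must pass to the limit and force $v_A=v_B$.'' This is false as a general principle; a family of strictly concave functions with unique maximizers can converge (uniformly on compacts, say) to a function with a flat top, e.g.\ $f_n(\rx)=-\rx^2/n\to 0$. A joint conditional weak limit of $(A,B)$ on its own therefore does \emph{not} imply the plateau $[\min(v_A,v_B),\max(v_A,v_B)]$ collapses, and if it does not collapse then $\pi^*(\tau T)-\argmax_{\rx}A$ would have a nondegenerate limit rather than vanish. In other words, $v_A=v_B$ a.s.\ under the joint coupling \emph{is} the content of the conjecture, not a technicality to be handled by tightness and lower semicontinuity. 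Your consistency check — that $v_A$ and $v_B$ have the same marginal law $\sqrt{\tau(1-\tau)}\,\calN(0,1)$ — is encouraging but is of course far from equality in the joint limit. Closing this gap almost certainly requires exploiting the conditioning $\sup_y\{A(y)+B(y)\}=L$ directly in the joint limit (so that the flat level of $A_\infty+B_\infty$ is pinned and the geometry forces $v_A=v_B$), rather than taking conditional scaling limits of $A$ and $B$ separately and then gluing.
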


The above conjecture would elaborate the rigidity phenomenon from a different aspect: the definition of the geodesic $\pi^*$ via \eqref{eq:pi*} is much more involved than the argmax process, and yet they become the same in the limit.

Now, following Conjectures~\ref{conj:03} and~\ref{conj:01} we arrive at the following conjecture. This would extend the marginal convergence (for fixed $\tau\in(0,1)$) in  Theorem \ref{thm:3}.

\begin{Conj}
	\label{conj:04}
	 	For all $T>0$, as $L\to\infty$, 
	\begin{multline*}
		\law\pp{\ccbb{\frac{2L^{1/4}\pi^*(\tau T)}{T^{3/4}}, \quad \frac{\calL(0,0;\pi^*(\tau T),\tau T)-\tau L}{T^{1/4}L^{1/4}}}_{\tau\in(0,1)} \mmid \mathcal{L}(0,0;0,T)=L} \\
		 \to 
	\law\pp{\ccbb{\B_1^\rmbr(\tau),\B_2^\rmbr(\tau)}_{\tau\in(0,1)}},
	\end{multline*}
where $\B_1^\rmbr$ and $\B_2^\rmbr$ are two 
i.i.d.~Brownian bridges
over interval $[0,1]$.

	\end{Conj}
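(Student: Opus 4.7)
The strategy is to reduce Conjecture \ref{conj:04} to the earlier Conjectures \ref{conj:03} and \ref{conj:01}, transferring the problem from the geodesic formulation on the $\calL$ side to the argmax/max formulation on the $\H_\step$ side via the distributional identity \eqref{eq:relation_Hstep} and the corresponding identification of conditional laws given $\calL(0,0;0,T)=L$ versus given $\H_\step(0,T)=L$. Under this identification, the rescaled transverse coordinate $2L^{1/4}\pi^*(\tau T)/T^{3/4}$ becomes a random location at which to evaluate $\wb\H_{\step,L}$, and the landscape value $(\calL(0,0;\pi^*(\tau T),\tau T)-\tau L)/(T^{1/4}L^{1/4})$ is exactly $\wb\H_{\step,L}$ at that location.

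The assembly then proceeds in two steps. First, Conjecture \ref{conj:01} says that, jointly in $\tau$, the rescaled transverse coordinate $2L^{1/4}\pi^*(\tau T)/T^{3/4}$ is asymptotically indistinguishable in conditional probability from $\argmax_{\rx}\wb\H_{\step,L}(\rx,\tau)$. Second, writing
\[
\frac{\calL(0,0;\pi^*(\tau T),\tau T)-\tau L}{T^{1/4}L^{1/4}}\eqd \wb\H_{\step,L}\pp{\frac{2L^{1/4}\pi^*(\tau T)}{T^{3/4}},\tau},
\]
and combining this with the joint tightness of $(\rx,\tau)\mapsto\wb\H_{\step,L}(\rx,\tau)$ under the conditional law (the ingredient underlying Conjecture \ref{conj:03}), a Slutsky-type argument shows that the right-hand side is asymptotically equivalent to $\wb\H_{\step,L}(\argmax_{\rx}\wb\H_{\step,L}(\rx,\tau),\tau)=\max_{\rx}\wb\H_{\step,L}(\rx,\tau)$. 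Conjecture \ref{conj:03} then delivers joint convergence of the pair $(\argmax,\max)$ to $(\B_1^\rmbr,\B_2^\rmbr)$, and one more application of Slutsky completes the argument.

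The real work therefore lies in the two input conjectures and their supporting tightness. For Conjecture \ref{conj:03}, one must upgrade the finite-dimensional convergence of Theorem \ref{thm:2} to functional convergence in $C((0,1))^2$ under the conditional law; as already noted, the missing ingredient is tightness of $\wb\H_{\step,L}$, which in practice requires quantitative modulus-of-continuity estimates for the KPZ fixed point that survive division by the Tracy--Widom density $\partial_{h}\proba(\H_\step(0,T)>h)|_{h=L}$. For Conjecture \ref{conj:01}, a natural approach is via the metric-composition identity
\[
\pi^*(\tau T)=\argmax_{x\in\R}\ccbb{\calL(0,0;x,\tau T)+\calL(x,\tau T;0,T)},
\]
coupled with a conditional concentration estimate showing that on the transverse window of size $T^{3/4}/L^{1/4}$ the second summand $\calL(\cdot,\tau T;0,T)$ is asymptotically flat, to leading order, after subtraction of a deterministic parabolic profile; Theorem \ref{thm:2} handles the first summand, so the joint argmax is driven by it alone. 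I expect this last step---upgrading the known marginal $L^{-1/4}$ transversal rigidity of $\pi^*$ from \cite{liu22when} to a joint functional statement over $\tau\in(0,1)$---to be the principal technical obstacle, as it effectively demands a two-sided conditional analogue of that single-time concentration.
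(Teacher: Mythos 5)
The statement you are proving is labeled a \emph{conjecture} in the paper, and the paper does not prove it; the text preceding Conjecture~\ref{conj:04} offers only the informal derivation that Conjecture~\ref{conj:04} follows from Conjectures~\ref{conj:03} and~\ref{conj:01} via the identification~\eqref{eq:relation_Hstep} of $\{\H_\step(x,t)\}$ with $\{\calL(0,0;x,t)\}$ and the matching of their conditional laws. Your proposal reproduces exactly that reduction, with the same ingredients: the coupling $\H_\step(x,t)=\calL(0,0;x,t)$, Conjecture~\ref{conj:01} to replace $2L^{1/4}\pi^*(\tau T)/T^{3/4}$ by the argmax of $\wb\H_{\step,L}(\cdot,\tau)$, a Slutsky-type argument using tightness of $\wb\H_{\step,L}$ to replace the value along $\pi^*$ by $\max_\rx\wb\H_{\step,L}(\rx,\tau)$, and then Conjecture~\ref{conj:03} for the joint limit $(\B_1^\rmbr,\B_2^\rmbr)$. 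The extra discussion you add about how one might attack Conjecture~\ref{conj:01} through the metric composition law and a two-sided conditional concentration, and Conjecture~\ref{conj:03} through tightness estimates, goes beyond what the paper says but is consistent with the route the paper sketches. Two small points worth keeping in mind: the identity $\frac{\calL(0,0;\pi^*(\tau T),\tau T)-\tau L}{T^{1/4}L^{1/4}}=\wb\H_{\step,L}\big(\frac{2L^{1/4}\pi^*(\tau T)}{T^{3/4}},\tau\big)$ is an exact pathwise equality under the coupling~\eqref{eq:relation_Hstep} (so ``$\eqd$'' undersells it, and the randomness of $\pi^*$ requires that coupling rather than a purely marginal distributional match); and the Slutsky step needs not just tightness of $\wb\H_{\step,L}$ but a uniform modulus-of-continuity control under the conditional law to pass from closeness of $\pi^*$ to the argmax to closeness of the evaluated heights. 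In short, your approach is essentially the paper's, correctly flagged as conditional on the two unproven inputs.
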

\begin{Rem}
In \cite[right before Section 9]{basu19connecting}, the authors suggested that for the closely related
exponential last passage percolation on $\mathbb Z^2$, 
Brownian bridge might arise as the scaling limit of the transversal fluctuations of the geodesic in the large deviation regime. 
\end{Rem}

{\em The paper is organized as follows.} In Section \ref{sec:prelim}, we recall the formula we shall work with from \cite{liu22multipoint}. In Sections \ref{sec:step} and \ref{sec:flat} we prove Theorem \ref{thm:2} for the step and flat initial conditions, respectively. 

\subsection*{Acknowledgements} 
The authors would like to thank Jinho Baik, Duncan Dauvergne, Shirshendu Ganguly, Yier Lin, Jeremy Quastel, and Daniel Remenik for very helpful comments and suggestions.
ZL's research was partially supported by
the University of Kansas Start Up Grant, the University
of Kansas New Faculty General Research Fund, Simons Collaboration Grant No.~637861, and NSF grant
DMS-1953687. 
 YW's research was partially supported by Army Research Office (W911NF-20-1-0139), USA.

\section{Preliminaries}\label{sec:prelim}
Recall that we let $\H$ denote the KPZ fixed point with $\mathsf h_0$ its initial condition, and it satisfies  the famous $1:2:3$-scaling-invariance property. Namely, 
\equh\label{eq:123 scaling}
\ccbb{\lambda \H\pp{\lambda^{-2}x,\lambda^{-3}t; \lambda^{-1}\mathsf h_0(\lambda^2\cdot)}}_{x\in\R, t>0} \eqd \ccbb{\H\pp{x,t;\mathsf h_0(\cdot)}}_{x\in\R,t>0}.
\eque
The invariance property holds for general $\mathsf h_0$. We shall work with $\H_\step$ (with $\mathsf h_0(x) = -\infty\inddd{x\ne 0}$) and $\H_\flat$ (with $\mathsf h_0(x) = 0$) specifically in this paper.

\subsection{Explicit formulas for marginal distributions}
In the spatial direction,  for fixed $t>0$, \cite{prahofer02scale} proved that $\{\H_\step(x,t)\}_{x\in\R}$ is the so-called Airy$_2$ process (minus a parabola).
The process $\{\H_\step(x,1)\}_{x\in\R}+x^2$  is a stationary process, and  it  is well-known that the marginal law, say, $\H_\step(0,1)$, has the Tracy--Widom GUE distribution. Namely,
let the function $u= u(x)$ be the Hastings--McLeod solution to the Painlev\'e-II equation $u'' = 2u^3 + x u$ that satisfies the boundary condition $u(x) \sim {\rm Ai}(x)$ as $x\to\infty$, where ${\rm Ai}(x)$ is the Airy function satisfying ${\rm Ai}''(x) = x{\rm Ai}(x)$. Then, it is well-known that
\[
u(x)\sim {\rm Ai}(x) \sim \frac1{2\sqrt \pi x^{1/4}}e^{-\frac23 x^{3/2}}, \mmas x\to\infty.
\]
Throughout, we write $a(x)\sim b(x)$ as $x\to\infty$ if $\lim_{x\to\infty}a(x)/b(x) = 1$. Then, 
\[
\proba\pp{\H_\step(0,1)\le L} = F_\GUE(L)= \exp\pp{-\int_L^\infty (\ell-L)u^2(\ell)\d\ell}.
\]
See \cite{baik08asymptotics} for more details and more related asymptotics. 
In particular, 
 the corresponding probability density function $p_{\step,0,1}(L)$ satisfies
\equh\label{eq:p_GUE}
p_{\step,0,1}(L) = p_\GUE(L) \sim \frac1{8\pi L}e^{-\frac43L^{3/2}} \mmas L\to\infty.
\eque

We also need later the GOE Tracy--Widom distribution, which has cumulative distribution function
\[
F_{\GOE}(L) = F_{\GUE}^{1/2}(L)\exp\pp{-\frac12\int_L^\infty u(s)ds},
\]
with the function $u$ as before. Therefore, its probability density function satisfies
 \[
 p_{\GOE}(L)\sim \frac1{4\sqrt \pi L^{1/4}}e^{-\frac 23 L^{3/2}} \mmas L\to\infty.
 \]
 This distribution is related to $\H_\flat$ via
$\proba(\H_\flat(0,1)\le L) = F_\GOE(2^{2/3}L)$, and hence
\[ p_{\flat,0,1}(L) = 2^{2/3}p_\GOE(2^{2/3}L) \sim \frac1{(8\pi\sqrt L)^{1/2}}e^{-\frac43 L^{3/2}} \mmas L\to\infty.
 \]
  
\subsection{Explicit formulas for joint distributions}
For the rest of this section we focus on $\H_\step$. The corresponding derivation for $\H_\flat$ is similar and provided later in Section \ref{sec:flat} when needed. 
Throughout we write $\vv h = (h_1,\dots,h_m)\in\R^m, \vvx = (x_1,\dots,x_m)\in \R^m$, and
\[
 \vvtau = (\tau_1,\dots,\tau_m) \qmwith 0 = \tau_0<\tau_1<\cdots<\tau_m.
 \]
Since we are proving for convergence of conditional distributions, we shall work with the conditional tail probability
\begin{equation}
\label{eq:Q_step}
\proba\pp{\H_\step(x_j,\tau_j)>h_j, j=1,\dots,m-1\mid \H_\step(x_m,\tau_m) = h_m},
\end{equation}
of which we derive an expression for the rest of this section. 
The formula we shall work with is summarize at the end in Lemma \ref{lem:Q hat}. 
The derivation below is already quite involved. The reason of not working directly with possibly equal time points is that the corresponding formula would become even more sophisticated; see \citep[Section 2.2.3]{liu22multipoint}.

We start with following formula \eqref{eq:F} from \cite{liu22multipoint}. The formula was not provided explicitly therein, but can be derived from  \cite[Proposition 2.9, Definition 2.25]{liu22multipoint}.
We have

\begin{multline}\label{eq:F}
    \proba\pp{\H(x_1,\tau_1)> h_1,\dots,\H(x_{m-1},\tau_{m-1})>h_{m-1},\H(x_m,\tau_m)\le h_m}\\
 = 
 (-1)^{m-1} \oint_{>1} \cdots \oint_{>1} {\mathsf D}_{\step,\vvx,\vvtau}(\vvz,\vvh)\frac{\d z_1}{2\pi\i z_1(1-z_1)}\cdots\frac{\d z_{m-1}}{2\pi \i z_{m-1}(1-z_{m-1})},
\end{multline}
where each $\oint_{>1}$ is integrated over a circle around the origin in the counter-clockwise direction with radii strictly larger than 1,
\[
\mathsf D_{\step,\vvx,\vvtau}(\vvz,\vvh)  := \sum_{\vvn\in\N_0^m}\frac1{(\vvn!)^2}\mathsf D\topp \vvn_{\step,\vvx,\vvtau}(\vvz,\vvh),
\]
with $\N_0:=\{0,1,\dots,\}$ and
\begin{align*}
\mathsf D\topp\vvn_{\step,\vvx,\vvtau}(\vvz,\vvh) 
&   := (-1)^{n_1+\cdots+n_m} \\
& \quad \times \prodd j2m\left[\prod_{i_j=1}^{n_j}\pp{\frac1{1-z_{j-1}}\int_{C_{j,\rmL}^{\rm in}}\frac{\d\xi_{i_j}\topp j}{2\pi\i}- \frac{z_{j-1}}{1-z_{j-1}}\int_{C_{j,\rmL}^{\rm out}}\frac{\d\xi_{i_j}\topp j}{2\pi\i}}\prod_{i_1=1}^{n_1}\int_{C_{1,\rmL}}\frac{\d\xi_{i_1}\topp 1}{2\pi\i}\right.\nonumber\\
& \quad\times \left.\prod_{i_j=1}^{n_j}\pp{\frac1{1-z_{j-1}}\int_{C_{j,\rmR}^{\rm in}}\frac{\d\eta_{i_j}\topp j}{2\pi\i}- \frac{z_{j-1}}{1-z_{j-1}}\int_{C_{j,\rmR}^{\rm out}}\frac{\d\eta_{i_j}\topp j}{2\pi\i}}\prod_{i_1=1}^{n_1}\int_{C_{1,\rmR}}\frac{\d\eta_{i_1}\topp 1}{2\pi\i}\right]\nonumber\\
& \quad \times \prodd j1{m-1}\pp{\frac{\Delta(\vv\xi\topp j;\vv\eta\topp{j+1})\Delta(\vv\eta\topp j;\vv\xi\topp {j+1})}{\Delta(\vv\xi\topp j;\vv\xi\topp{j+1})\Delta(\vv\eta\topp j;\vv\eta\topp {j+1})} (1-z_j)^{n_j}\pp{1-\frac1{z_j}}^{n_{j+1}}}\nonumber\\
& \quad\times \prodd j1m \pp{\frac{\Delta(\vv\xi\topp j)\Delta(\vv\eta\topp j)}{\Delta(\vv\xi\topp j;\vv\eta\topp j)}}^2
\prodd j1m \prodd {i_j}1{n_j}\frac{f_{\wt x_j,\wt \tau_j}(\xi_{i_j}^{(j)},\wt h_j)}{f_{\wt x_j,\wt \tau_j}(\eta_{i_j}^{(j)},\wt h_j)},
\end{align*}
and the notations are further explained as follows.

First, the second and third lines above are understood as a compact form of linear combinations of $2(n_1+\cdots+n_m)$-multiple contour integrals with respect to $\xi\topp j_{i_j}, \eta\topp j_{i_j}, j=1,\dots,m, i_j=1,\dots,n_j$, and the fourth and fifth row are the integrands. Here and below, we fix $\vvn = (n_1,\dots,n_m)\in\N_0^m$, and for each $j$, $\vv\xi\topp j = (\xi\topp j_1,\dots,\xi\topp j_{n_j}),\vv\eta\topp j = (\eta\topp j_1,\dots,\eta\topp j_{n_j})\in \C^{n_j}$. Throughout we write, with $\vv w = (w_1,\dots,w_k)$ and $\vv w' = (w_1',\dots,w_{k'}')$, 
\[
\Delta(\vv w)   :=\prod_{1\le i<j\le k}(w_j-w_i) \qmand
\Delta(\vv w;\vv w')  := \prodd i1k \prodd {i'}1{k'}(w_i-w_{i'}'),
\]
with the convention $\prod_{i=1}^0(\cdots)  = 1$.
Moreover,  
\begin{align}
	\label{eq:def_f}
	f_{\wt x,\wt \tau}(\zeta,\wt h) : = \exp  \left(-\frac13 \wt \tau\zeta^3 +\wt x \zeta^2 +\wt h\zeta\right).
\end{align}
We also set 
\[
\wt \tau_j := \tau_j-\tau_{j-1},\quad \wt x_j := x_j-x_{j-1}, \quad \wt h_j := h_j-h_{j-1}, \quad j=1,\dots,m,
\]
and $x_0=h_0=\tau_0=0$.
In particular, $\summ j1m \wt\tau_j = \tau_m$ and $\summ j1m \wt h_j = h_m$. 

The choice of the contours $C_{j,\rmL/\rmR}^{\rmin/\rmout}$ is delicate and plays a crucial role in our analysis later, and we summarize some key features in the following remark.
\begin{Rem}
Throughout we only need and work with the formula with $\wt\tau_j>0$ for all $j=1,\dots,m$. 
We describe the choice of the contours (actually, only the direction towards infinity matters regarding integrability of the multiple integrals), starting with 
\[
C_{m,\rmL}^{\rm in},\dots,C_{2,\rmL}^{\rm in}, C_{1,\rmL}, C_{2,\rmL}^{\rm out},\dots, C_{m,\rmL}^{\rm out}.
\]
These contours in the left half-plane are non-intersecting. To guarantee integrability, we may take each starting from $e^{-2\pi \i/3}\infty$ to $e^{2\pi\i/3}\infty$, located from left to the right. The choice of the angle is such that the leading term $-\wt\tau_j(\xi_{i_j}\topp j)^3$ in the exponential function has a negative real part as $\xi_{i_j}\topp j$ tends to infinity along the contour (in both directions).   Similarly, one can pick, $C_{m,\rmR}^{\rm out}, \dots, C_{2,\rmR}^{\rm out}, C_{1,\rmR},C_{2,\rmR}^{\rm in},\dots,C_{m,\rmR}^{\rm in}$ are contours in the right half-plane, non-intersecting, each starting from $e^{-\pi \i/3}\infty$ to $e^{\pi\i/3}\infty$, and located from left to the right. 
\end{Rem}
From the above we derive an expression of the tail probability that we work with. The formula takes a similar form as in \eqref{eq:F}.  
Introduce
\begin{multline}\label{eq:Pi_n}
\Pi_\vvn(\vec{\vv\xi}\topp m, \vec{\vv\eta}\topp m)\equiv \Pi_\vvn(\vv\xi\topp 1,\cdots,\vv\xi\topp m; \vv\eta\topp1,\cdots,\vv\eta\topp m)\\
:=(-1)^{n_1+\cdots+n_m} \prodd j1{m-1}\frac{\Delta(\vv\xi\topp j;\vv\eta\topp{j+1})\Delta(\vv\eta\topp j;\vv\xi\topp {j+1})}{\Delta(\vv\xi\topp j;\vv\xi\topp{j+1})\Delta(\vv\eta\topp j;\vv\eta\topp {j+1})} \prodd j1m \pp{\frac{\Delta(\vv\xi\topp j)\Delta(\vv\eta\topp j)}{\Delta(\vv\xi\topp j;\vv\eta\topp j)}}^2 
\cdot  \sum_{i_m=1}^{n_m}\pp{\xi\topp m_{i_m}-\eta\topp m_{i_m}},
\end{multline}
and 
\begin{equation}
\label{eq:whatD}
\begin{split}
 \what{\mathsf D}\topp\vvn_{\step,\vvx,\vvtau}(\vvz,\vvh) 
  & :=  \frac{\partial}{\partial h_m}\mathsf D_{\step,\vvx,\vvh}\topp{\vvn}(\vvz,\vvh) \\
& = 
  \prodd j2m \Bigg[ (1-z_{j-1})^{n_{j-1}}\pp{1-\frac1{z_{j-1}}}^{n_{j}} \\
  &\quad \times  \prod_{i_j=1}^{n_j}\pp{\frac1{1-z_{j-1}}\int_{C_{j,\rmL}^{\rm in}}\frac{\d\xi_{i_j}\topp j}{2\pi\i}- \frac{z_{j-1}}{1-z_{j-1}}\int_{C_{j,\rmL}^{\rm out}}\frac{\d\xi_{i_j}\topp j}{2\pi\i}}\times\prod_{i_1=1}^{n_1}\int_{C_{1,\rmL}}\frac{\d\xi_{i_1}\topp 1}{2\pi\i}\\
  &\quad \times \prod_{i_j=1}^{n_j}\pp{\frac1{1-z_{j-1}}\int_{C_{j,\rmR}^{\rm in}}\frac{\d\eta_{i_j}\topp j}{2\pi\i}- \frac{z_{j-1}}{1-z_{j-1}}\int_{C_{j,\rmR}^{\rm out}}\frac{\d\eta_{i_j}\topp j}{2\pi\i}}\times \prod_{i_1=1}^{n_1}\int_{C_{1,\rmR}}\frac{\d\eta_{i_1}\topp 1}{2\pi\i}\Bigg]\\
  &\quad \times \Pi_\vvn(\vec{\vv\xi}\topp m, \vec{\vv\eta}\topp m)\times  \prodd j1m \prodd {i_j}1{n_j}\frac{f_{\wt x_j,\wt \tau_j}(\xi_{i_j}^{(j)},\wt h_j)}{f_{\wt x_j,\wt \tau_j}(\eta_{i_j}^{(j)},\wt h_j)}.
\end{split}
\end{equation}
\begin{Lem}
\label{lem:Q hat}
We have
\equh\label{eq:Q conditional}
\proba\pp{\H_\step(x_j,\tau_j)>h_j, j=1,\dots,m-1\mid \H_\step(x_m,\tau_m) = h_m}
= \frac{\what Q_{\step,\vvx,\vvtau}(\vvh)}{p_{\step,0,T}(h_m)},\quad \mfa \vvh\in\R^m,
\eque
where $p_{\step,0,T}$ is the probability density function of $\H_\step(0,T)$, 
\[
\what Q_{\step,\vvx,\vvtau}(\vvh) := \sum_{\vvn\in\N_0^m}\frac1{(\vvn!)^2} \what Q_{\step,\vvx,\vvtau}\topp \vvn(\vvh), 
\]with
\equh\label{eq:Q_step n}
\what Q_{\step,\vvx,\vvtau}\topp\vvn(\vvh) := (-1)^{m-1}\oint_{>1} \cdots \oint_{>1} {\what{\mathsf D}\topp\vvn}_{\step,\vvx,\vvtau}(\vvz,\vvh)\frac{\d z_1}{2\pi\i z_1(1-z_1)}\cdots\frac{\d z_{m-1}}{2\pi \i z_{m-1}(1-z_{m-1})}, 
\eque
for each $\vvn\in\N^m$, where
where each $\oint_{>1}$ is integrated over a contour around the origin in the counter-clockwise direction.
\end{Lem}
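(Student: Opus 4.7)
The plan is to derive \eqref{eq:Q conditional} by applying the conditional probability identity from Remark~\ref{rem:conditional law} to the tail formula \eqref{eq:F} of \cite{liu22multipoint}, then computing a single $h_m$-derivative in closed form. Writing $A_j := \{\H_\step(x_j,\tau_j)>h_j\}$, Remark~\ref{rem:conditional law} gives
\[
\proba\pp{\bigcap_{j=1}^{m-1}A_j \mmid \H_\step(x_m,\tau_m)=h_m} = \frac{\partial_{h_m}\proba(A_1\cap\cdots\cap A_m)}{\partial_{h_m}\proba(\H_\step(x_m,\tau_m)>h_m)}.
\]
The denominator equals $-p_{\step,0,T}(h_m)$ upon setting $(x_m,\tau_m)=(0,T)$, the conditioning location of interest. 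For the numerator, the inclusion identity
\[
\proba(A_1\cap\cdots\cap A_m) = \proba(A_1\cap\cdots\cap A_{m-1}) - \proba\pp{A_1,\dots,A_{m-1},\H_\step(x_m,\tau_m)\le h_m}
\]
shows that $\partial_{h_m}\proba(A_1\cap\cdots\cap A_m)$ is the negative of the $h_m$-derivative of the quantity in \eqref{eq:F}, and the two minus signs then cancel when we form the quotient.

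Next, I would compute $\partial_{h_m}\mathsf D_{\step,\vvx,\vvtau}^{(\vvn)}(\vvz,\vvh)$ termwise in $\vvn\in\N_0^m$. Among all factors comprising $\mathsf D_{\step,\vvx,\vvtau}^{(\vvn)}$, only the final product $\prod_{j=1}^m\prod_{i_j=1}^{n_j}f_{\wt x_j,\wt\tau_j}(\xi_{i_j}^{(j)},\wt h_j)/f_{\wt x_j,\wt\tau_j}(\eta_{i_j}^{(j)},\wt h_j)$ depends on $h_m$, and only through $\wt h_m=h_m-h_{m-1}$ in the $j=m$ block. From \eqref{eq:def_f} we have $\partial_{\wt h}\log f_{\wt x,\wt\tau}(\zeta,\wt h)=\zeta$, so logarithmic differentiation of that block yields a scalar prefactor $\sum_{i_m=1}^{n_m}(\xi_{i_m}^{(m)}-\eta_{i_m}^{(m)})$ multiplying the same product. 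This is exactly the trailing factor appearing in $\Pi_\vvn$ in \eqref{eq:Pi_n}. A purely notational bookkeeping step then rewrites $\prod_{j=1}^{m-1}(1-z_j)^{n_j}(1-1/z_j)^{n_{j+1}}$ as $\prod_{j=2}^m(1-z_{j-1})^{n_{j-1}}(1-1/z_{j-1})^{n_j}$ and absorbs these scalar powers into the outer $\prod_{j=2}^m[\cdots]$ bracket, so the resulting expression matches term-by-term the display \eqref{eq:whatD} for $\what{\mathsf D}_{\step,\vvx,\vvtau}^{(\vvn)}$. Summing over $\vvn$ with weight $1/(\vvn!)^2$ and restoring the outer $(m-1)$-fold $\vvz$-integral then reconstructs $\what Q_{\step,\vvx,\vvtau}(\vvh)$ in the form \eqref{eq:Q_step n}.

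The main obstacle, and really the only nontrivial analytic input, is justifying the interchange of $\partial/\partial h_m$ with both the sum $\sum_{\vvn\in\N_0^m}$ and the $(m-1)$-fold $\vvz$-contour integrals in \eqref{eq:F}. This reduces to a dominated-convergence argument uniform in $\vvn$: along the prescribed contours $C_{j,\rmL/\rmR}^{\rmin/\rmout}$ the cubic term in \eqref{eq:def_f} supplies exponential decay in each $\xi_{i_j}^{(j)},\eta_{i_j}^{(j)}$ (as all $\wt\tau_j>0$), which both renders the inner multiple integrals absolutely convergent and, together with the $1/(\vvn!)^2$ weight and standard Vandermonde bounds, yields absolute convergence of the $\vvn$-series uniformly for $h_m$ in a neighborhood of any fixed value. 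Since these are the same estimates that underpin the convergence of \eqref{eq:F} itself in \cite{liu22multipoint}, the same route justifies the interchange here. Once interchange is granted, the remaining derivation is purely algebraic rearrangement.
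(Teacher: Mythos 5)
Your proposal is correct and follows the same route as the paper's (very terse) proof: define $\what Q$ as $\partial_{h_m}$ of the left-hand side of \eqref{eq:F}, observe that by Remark~\ref{rem:conditional law} this equals $p_{\step,0,T}(h_m)$ times the conditional probability, and note that the only effect of $\partial_{h_m}$ on $\mathsf D^{(\vvn)}_{\step,\vvx,\vvtau}$ is the extra $\sum_{i_m}(\xi^{(m)}_{i_m}-\eta^{(m)}_{i_m})$ factor from differentiating the $j=m$ block of the $f$-product. You supply details the paper leaves as ``readily checked'' (the sign cancellation via the inclusion identity and the justification of interchanging $\partial_{h_m}$ with the $\vvn$-sum and contour integrals), but the argument is the same.
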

\begin{proof}
Note that the function $\what Q_{\step,\vvx,\vvtau}$ can be defined as 
\[ \what Q_{\step,\vvx,\vvtau}(\vvh) := \frac{\partial}{\partial h_m}\proba\pp{\H(x_1,\tau_1)> h_1,\dots,\H(x_{m-1},\tau_{m-1})>h_{m-1},\H(x_m,\tau_m)\le h_m}.
\]
One then readily checks the stated formula.
Note that 
the only difference between $\what{\mathsf D}_{\step,\vvx,\vvtau}\topp\vvn$ and $\mathsf D_{\step,\vvx,\vvtau}\topp\vvn$ is the extra factor in the second line of \eqref{eq:Pi_n} (note that we are differentiating with respect to $h_m$, not $\wt h_m = h_m-h_{m-1}$). 
\end{proof}

\section{Proof for the case with step initial condition}\label{sec:step}

Thanks to the scaling invariance property of the KPZ fixed point in~\eqref{eq:123 scaling}, it is sufficient to prove~\eqref{thm:2} in Theorem \ref{thm:2}  for the case of $T=1$, which we assume throughout this section.
We shall then prove the following restatement of Theorem \ref{thm:2} in the case with step initial condition:
\begin{multline}\label{eq:limit step}
\calL\pp{\ccbb{\frac{\H_\step(\frac{\rx}{\sqrt 2L^{1/4}},\tau)-\tau\H_\step(0,1)}{\sqrt 2L^{1/4}} }_{\rx\in\R,\tau\in(0,1)}\mmid  \H_\step(0,1) = L}\\
 \fddto \calL\pp{\ccbb{\min\ccbb{\B^{\rmbr}_1(\tau)+\rx,\B^{\rmbr}_2(\tau)-\rx}}_{\rx\in\R,\tau\in(0,1)}},
\end{multline}
as $L\to\infty$, where $\B^\rmbr_1$ and $\B^\rmbr_2$ are two independent Brownian bridges.

We shall prove the above in two steps. For most part of this section before Section \ref{sec:ZP}, we consider convergence of finite-dimensional distributions at  {\em distinct} time points $\tau_0,\dots,\tau_m$. Then in Section \ref{sec:ZP}, we prove the general case when some time points may be the same.

Fix $m\ge 2$, and set
\[
\begin{split}
&0=\tau_0<\tau_1<\cdots<\tau_m = 1,\\
& \rh_1,\dots,\rh_{m-1}\in\R, \quad \rh_0=\rh_m = 0, \\
& \rx_1,\dots,\rx_{m-1}\in\R, \quad \rx_0=\rx_m = 0,
\end{split}
\]
and 
\begin{equation}
\label{eq:h_L,j}
h_{L,j}  =\tau_j L +\rh_j\cdot \sqrt{2}L^{1/4}\qmand 
x_{L,j}   =\rx_j \cdot\frac{1}{\sqrt{2}L^{1/4}},\quad
j=1,\cdots,m, \quad L>0.
\end{equation}
We also write $\vvrx=(\rx_1,\cdots,\rx_{m-1})$, ${\vvrh}=(\rh_1,\cdots,\rh_{m-1})$, $\vvx_L=(x_{L,1},\cdots,x_{L,m})$, $\vvh_L=(h_{L,1},\cdots,h_{L,m})$ and $\vvtau=(\tau_{1},\cdots,\tau_{m})$.
Then, in accordance to \eqref{eq:Q_step} we consider,
\[
\proba\pp{\H_\step(x_{L,j},\tau_j)>h_{L,j}, j=1,\dots,m-1\mmid \H_\step(0,1) = L}
 = \frac{\what Q_{\step,\vvx_L,\vvtau}(\vvh_L)}{p_{\GUE}(L)},
\]
with $\what Q_{\step,\vvx_L,\vvtau}(\vvh_L)$ in Lemma \ref{lem:Q hat}. 
Recall the asymptotic probability density function of $p_{\rm GUE}$ in \eqref{eq:p_GUE}.
 Write the corresponding tail probability in the limit as
\[
\mathsf Q_{\vvrx,\vvtau}(\vvrh)  := \proba\pp{\min\ccbb{\B_1^{\rmbr}(\tau_j)+\rx_j,  \B_2^{\rmbr}(\tau_j)-\rx_j}>\rh_j,j=1,\dots,m-1}.
\]
Therefore, \eqref{eq:limit step} is equivalent to
 \equh\label{eq:limit pdf}
\frac{\what Q_{\step,\vvx_L,\vvtau}(\vvh_L)}{p_\GUE(L)} \sim 
\frac{\what Q_{\step,\vvx_L,\vvtau}(\vvh_L)}{(8\pi L)\inv \exp\pp{-\frac43L^{3/2}}} \to \mathsf Q_{\vvrx,\vvtau}(\vvrh) \mmas L\to\infty.
\eque

The goal of the rest of this section is to prove \eqref{eq:limit pdf}. We start by recalling $
\what Q_{\step,\vvx_L,\vvtau}(\vvh_L) = \sum_{\vvn\in\N_0^m}\frac1{(\vvn!)^2} \what Q_{\step,\vvx_L,\vvtau}\topp \vvn(\vvh_L)$ in Lemma \ref{lem:Q hat}.
Then,  \eqref{eq:limit pdf} follows immediately from  the following three lemmas, with $\tau_j$ all distinct.  
 
\begin{Lem}\label{lem:1}
If $\vvn\in\N_0^m\setminus\N^m$, that is, $n_j = 0$  for some $j=1,\dots,m$, then $\what Q_{\step,\vvx,\vvtau}\topp\vvn(\vvh)= 0$.
\end{Lem}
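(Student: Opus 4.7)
My proof plan splits into two disjoint cases, depending on which index $j$ has $n_j = 0$.

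\textbf{Case 1: $n_m = 0$.} This is immediate from the structure of $\Pi_\vvn$. Inspecting~\eqref{eq:Pi_n}, the final factor is $\sum_{i_m=1}^{n_m}(\xi_{i_m}\topp m - \eta_{i_m}\topp m)$, which is an empty sum (hence identically zero) when $n_m = 0$. Therefore $\Pi_\vvn \equiv 0$ and consequently $\what Q_{\step,\vvx,\vvtau}\topp\vvn(\vvh) = 0$.

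\textbf{Case 2: $n_j = 0$ for some $1 \le j \le m-1$.} The plan is to show that the contour integral over $z_j$ alone already vanishes, for every choice of inner/outer decompositions of the other contours. When $n_j = 0$ there are no $\xi_{i_j}\topp j, \eta_{i_j}\topp j$ integration variables, so the level-$j$ inner/outer decomposition (which would have produced $\frac{1}{1-z_{j-1}}$ factors) contributes nothing. Collecting everything in~\eqref{eq:whatD}, the $z_j$-dependent pieces are $(1-z_j)^{n_j}(1-1/z_j)^{n_{j+1}} = (1-1/z_j)^{n_{j+1}}$ from the $(j+1)$-block of the outer product, the $2n_{j+1}$ factors of the form $\frac{1}{1-z_j}$ or $-\frac{z_j}{1-z_j}$ coming from the inner/outer decomposition of the level-$(j+1)$ contours, and the measure $\frac{\d z_j}{2\pi\i z_j(1-z_j)}$.

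For any fixed choice of inner vs.~outer at level $j+1$ (say, $a$ outer among $\xi\topp{j+1}$ and $b$ outer among $\eta\topp{j+1}$, with $0 \le a,b \le n_{j+1}$), combining all of these factors yields a $z_j$-integrand of the form
\[
\frac{(-1)^{n_{j+1}+a+b}\, z_j^{\,a+b-n_{j+1}-1}}{(1-z_j)^{n_{j+1}+1}},
\]
which is rational in $z_j$ with poles only at $0$ and $1$. As $z_j \to \infty$ it behaves like $z_j^{\,a+b-2n_{j+1}-2} = O(z_j^{-2})$, since $a+b \le 2n_{j+1}$; hence the residue at infinity vanishes. By the residue theorem, the integral along the circle $|z_j| = r > 1$ (which encloses both finite poles) is therefore zero. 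Summing over all inner/outer choices at level $j+1$ then gives $\what Q_{\step,\vvx,\vvtau}\topp\vvn(\vvh) = 0$.

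I expect the main obstacle to be careful bookkeeping, specifically tracking every $z_j$-dependent factor in~\eqref{eq:whatD}---the exponent factors $(1-z_{j-1})^{n_{j-1}}(1-1/z_{j-1})^{n_j}$ versus $(1-z_j)^{n_j}(1-1/z_j)^{n_{j+1}}$, the inner/outer decompositions at levels $j$ versus $j+1$, and the measure---to confirm that with $n_j = 0$ the $z_j$-integrand really does decay as $O(z_j^{-2})$ at infinity. Once this decay is verified, the residue-at-infinity argument is routine. A minor sanity check is to confirm that the argument applies at the boundary $j = 1$ (so that only $z_j = z_1$ is in play and there is no $z_{j-1}$ to worry about) and in the degenerate subcase $n_{j+1} = 0$ (where the integrand reduces to $\frac{1}{z_j(1-z_j)}$, whose residues at $0$ and $1$ cancel).
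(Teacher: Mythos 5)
Your proof is correct and follows essentially the same approach as the paper: dispose of $n_m=0$ by observing the final empty sum in $\Pi_{\vvn}$ vanishes, and otherwise isolate the $z$-variable adjacent to a vanishing $n_j$, expand the inner/outer choices, and show the resulting rational integrand decays at least as $O(z^{-2})$ so the contour integral over an arbitrarily large circle (equivalently, the residue at infinity) is zero. The only cosmetic difference is that you index from the zero $n_j$ and look at $z_j$, while the paper indexes from the first nonzero $n_j$ following a zero and looks at $z_{j-1}$ — these are the same integral.
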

\begin{Lem}\label{lem:2}
As $L\to\infty$, with $\vv1 \equiv (1,\dots,1)\in\N^m$,
\[
\what Q\topp{\vv1}_{\step,\vvx_L,\vvtau}(\vvh_L) \sim \frac1{8\pi L}e^{-\frac 43L^{3/2}}\times \mathsf Q_{\vvrx,\vvtau}(\vvrh).
\]
\end{Lem}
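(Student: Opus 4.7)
The plan is a steepest-descent analysis of the explicit contour-integral formula for $\what Q^{\vv 1}_{\step,\vvx_L,\vvtau}(\vvh_L)$ supplied by Lemma~\ref{lem:Q hat}. With the scaling \eqref{eq:h_L,j} in hand, I would first locate the saddle points of the exponent $\sum_j[g_j(\xi_j)-g_j(\eta_j)]$, where $g_j(\zeta):=-\frac{\wt\tau_j}{3}\zeta^3+\wt x_j\zeta^2+\wt h_j\zeta$ and $\wt h_j=\wt\tau_j L+O(L^{1/4})$. The saddles lie at $\pm\sqrt L$, and the vertical steepest-descent direction at each of them is compatible with the prescribed orientations of $C_{j,\rmL}^{\rm in/out}$ (near $-\sqrt L$) and $C_{j,\rmR}^{\rm in/out}$ (near $+\sqrt L$). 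I would then substitute
\[
\xi_j \;=\; -\sqrt L+\frac{\i\zeta_j}{\sqrt 2\,L^{1/4}},\qquad \eta_j \;=\; \sqrt L+\frac{\i\omega_j}{\sqrt 2\,L^{1/4}},
\]
and Taylor-expand. The $L^{3/2}$ contributions from the two saddles sum to $-\tfrac{4}{3}L^{3/2}$ (using $\sum_j\wt\tau_j=1$), the next-order $L^{3/4}$ contributions telescope to boundary sums in the $\rh_j$ and the $\rx_j$ and therefore vanish thanks to $\rx_0=\rx_m=0=\rh_0=\rh_m$, and what survives in the exponent is the finite expression
\[
\sum_{j=1}^m\left(-\tfrac{\wt\tau_j}{2}\zeta_j^2+\i A_j\zeta_j-\tfrac{\wt\tau_j}{2}\omega_j^2-\i B_j\omega_j\right),
\]
with $A_j:=(\rh_j-\rh_{j-1})-(\rx_j-\rx_{j-1})$ and $B_j:=(\rh_j-\rh_{j-1})+(\rx_j-\rx_{j-1})$.

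Next I would evaluate $\Pi_{\vv 1}$ from \eqref{eq:Pi_n} at the saddles. The ``cross'' factors $(\eta_{j+1}-\xi_j),(\xi_{j+1}-\eta_j)$ and $(\xi_j-\eta_j)$ each behave as $\pm 2\sqrt L$, whereas the ``same-side'' differences $(\xi_{j+1}-\xi_j)$ and $(\eta_{j+1}-\eta_j)$ scale as $O(L^{-1/4})$ and survive in the limit as the Cauchy kernels $1/\prod_{j=1}^{m-1}(\zeta_{j+1}-\zeta_j)(\omega_{j+1}-\omega_j)$. Combining this with the $L^{-1/2}$-per-pair Jacobian of the change of variables, the $(\xi_m-\eta_m)\sim-2\sqrt L$ factor in $\Pi_{\vv 1}$, and the $(2\pi\i)^{-2m}$ normalization of the $\xi,\eta$ integrals, one extracts the algebraic prefactor $\tfrac{1}{4L}$ in front of those Cauchy kernels. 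The $z_j$-integrations, specialized to $\vvn=\vv 1$, reduce to elementary residues at $z_j=0$; together with the selectors $(1-z_{j-1})^{-1}\int_{C^{\rm in}}-z_{j-1}(1-z_{j-1})^{-1}\int_{C^{\rm out}}$ they translate, via Cauchy's theorem, into a prescription fixing the relative imaginary heights of the rescaled $\zeta_j$-contours (and the $\omega_j$-contours) consistently with the time-ordering $\tau_1<\cdots<\tau_m$.

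The resulting Gaussian-times-Cauchy-kernel multi-contour integral factorizes into independent $\zeta$- and $\omega$-pieces because $A_j$ does not couple to any $\omega_j$ and $B_j$ does not couple to any $\zeta_j$. Each piece is the integral representation of the joint tail of a Brownian bridge at times $\tau_1,\dots,\tau_{m-1}$: a Fourier/Laplace inversion of the Gaussian transition density $\prod_j(2\pi\wt\tau_j)^{-1/2}\exp(-(a_j-a_{j-1})^2/(2\wt\tau_j))$, with the Cauchy-kernel structure playing the role of the indicators $\mathbf 1_{\{\B^\rmbr(\tau_j)>a_j\}}$, yields $\proba(\B_1^\rmbr(\tau_j)>\rh_j-\rx_j,\,j=1,\dots,m-1)$ from the $\zeta$-piece and $\proba(\B_2^\rmbr(\tau_j)>\rh_j+\rx_j,\,j=1,\dots,m-1)$ from the $\omega$-piece. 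By the independence of $\B_1^\rmbr$ and $\B_2^\rmbr$ and the almost-sure absence of ties, the product of these two probabilities equals $\mathsf Q_{\vvrx,\vvtau}(\vvrh)$; combining this with the prefactor $\frac{1}{8\pi L}e^{-\frac{4}{3}L^{3/2}}$ assembled above yields the asymptotic claimed.

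The hardest step will be the identification in the previous paragraph of the Gaussian-Cauchy multi-contour integral with the Brownian-bridge joint tail. It requires careful bookkeeping of the relative heights of the rescaled $\zeta_j$-contours (inherited from the $C^{\rm in/out}$ labels before rescaling) and a coherent accounting of the residues picked up as the $z_j$-integrations re-sum the contour combinatorics. A secondary but non-negligible challenge is the justification of the exchange of limit and integration on the unbounded rescaled contours: one needs an $L$-uniform integrable majorant for the rescaled integrand, which in this setting rests on the cubic decay of the exponent away from the saddles, combined with polynomial control on the Cauchy kernels away from their singularities.
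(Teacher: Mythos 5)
Your sketch follows the same steepest--descent route as the paper's proof: saddles at $\pm\sqrt L$, rescaling at scale $2^{-1/2}L^{-1/4}$, telescoping of the $L^{3/2}$ and $L^{3/4}$ exponent terms using $\sum_j\wt\tau_j=1$, $\sum_j\wt\rx_j=\sum_j\wt\rh_j=0$, the $\Pi_{\vv 1}$-factor producing Cauchy kernels $\prod_j\frac1{(u_j-u_{j+1})(v_j-v_{j+1})}$ with a prefactor yielding $\frac1{8\pi L}$, collapse of the $z_j$-integrals to the all--$C^{\rmout}$ contour for $\vvn=\vv 1$, and finally the identification of the factorized Gaussian-times-Cauchy-kernel multi-contour integral with the product of two Brownian--bridge joint tail probabilities (Lemma~\ref{lem:bridge} in the paper). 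Two small remarks: the paper keeps the rescaled contours as wedges $\Sigma_{j,\rmL/\rmR}$ rather than your vertical lines so that the cubic term in $g_L$ still decays for finite $L$, which makes the dominated--convergence majorant immediate -- your stated justification (``cubic decay away from the saddles'') does not actually hold on vertical lines, though Gaussian decay with an $L$-uniform bound on the cubic correction still suffices; and for the $\omega$-piece the contours are ordered oppositely, so one gets $\proba(\B^\rmbr(1-\tau_j)>\rh_j+\rx_j)$ and must invoke time-reversibility of the bridge to finish, a step your sketch glosses over.
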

\begin{Lem}\label{lem:3}
For every $\epsilon'\in (0, \min_{j=1,\dots,m}\wt\tau_j)$, there exists a constant $C>0$ such that
\[
\sum_{\vvn\in\N^m\setminus\{\vv1\}}\frac1{(\vvn!)^2}\abs{\what Q_{\step,\vvx_L,\vvtau}\topp\vvn(\vvh_L)} \le C  \exp\pp{-\frac{4(1+\epsilon')}3L^{3/2}},
\]
for all $L$ large enough.
\end{Lem}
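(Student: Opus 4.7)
The plan is to perform a uniform steepest-descent analysis on the multiple contour integral in \eqref{eq:Q_step n}, extract the dominant exponential factor for each $\vvn\in\N^m$, and then exploit the gap that appears whenever $\vvn\ne\vv1$. First I would rescale each integration variable by $\sqrt L$, writing $\xi^{(j)}_{i_j}=\sqrt L\,\wt\xi^{(j)}_{i_j}$ and $\eta^{(j)}_{i_j}=\sqrt L\,\wt\eta^{(j)}_{i_j}$. With $\vvh_L,\vvx_L$ from \eqref{eq:h_L,j} and $f_{\wt x,\wt\tau}$ from \eqref{eq:def_f}, each ratio $f_{\wt x_{L,j},\wt\tau_j}(\xi^{(j)}_{i_j},\wt h_{L,j})/f_{\wt x_{L,j},\wt\tau_j}(\eta^{(j)}_{i_j},\wt h_{L,j})$ becomes $\exp\pp{L^{3/2}\bb{\phi_j(\wt\xi^{(j)}_{i_j})-\phi_j(\wt\eta^{(j)}_{i_j})}+O(L^{3/4})}$, with $\phi_j(w):=-\wt\tau_j w^3/3+\wt\tau_j w$, whose critical points are $w=\pm 1$ and $\phi_j(\pm 1)=\pm 2\wt\tau_j/3$.

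Next I would deform the rescaled contours $C_{j,\rmL}^{\rm in/out}$ through the left saddle $\wt\xi=-1$ along a steepest-descent direction, and $C_{j,\rmR}^{\rm in/out}$ through the right saddle $\wt\eta=+1$, preserving the nesting described in the Remark following \eqref{eq:whatD}. Along these contours the exponent peaks at the saddles with value $\phi_j(-1)-\phi_j(1)=-4\wt\tau_j/3$ per $(\xi,\eta)$-pair at level $j$, producing an overall factor $\exp\pp{-\frac{4}{3}L^{3/2}\sum_j n_j\wt\tau_j}$. The residual factors are controlled as follows. The Vandermondes $\Delta(\vv\xi^{(j)}),\Delta(\vv\eta^{(j)})$, the inverse $\Delta(\vv\xi^{(j)};\vv\eta^{(j)})^{-1}$, and the cross-level products in \eqref{eq:Pi_n} together contribute at most $L^{P(\vvn)}$ for a polynomial $P$ in $\vvn$ (the inverse Vandermonde is safe because $\xi$- and $\eta$-contours live in opposite half-planes and stay uniformly separated); the rescaling Jacobian gives $L^{-|\vvn|}$; and the $z$-integrals on fixed circles around the origin, together with the weights $(1-z_{j-1})^{n_{j-1}}(1-1/z_{j-1})^{n_j}$, contribute at most $C^{|\vvn|}$. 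Collecting,
\[
\abs{\what Q\topp\vvn_{\step,\vvx_L,\vvtau}(\vvh_L)}\le L^{P(\vvn)}C^{|\vvn|}\exp\pp{-\frac{4}{3}L^{3/2}\sum_{j=1}^m n_j\wt\tau_j}.
\]

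For any $\vvn\in\N^m\setminus\{\vv1\}$ at least one $n_{j^\star}\ge 2$, so $\sum_{j=1}^m n_j\wt\tau_j\ge 1+\wt\tau_{j^\star}\ge 1+\min_j\wt\tau_j$. Picking $\epsilon''\in(\epsilon',\min_j\wt\tau_j)$, the slack $\exp\pp{-\frac{4(\min_j\wt\tau_j-\epsilon'')}{3}L^{3/2}}$ dominates $L^{P(\vvn)}$ for $L$ large, while $(\vvn!)^{-2}$ tames $C^{|\vvn|}$; summation over $\vvn\in\N^m\setminus\{\vv1\}$ then yields Lemma~\ref{lem:3} as stated.

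The main obstacle is making every estimate \emph{uniform in $\vvn$}. The contour deformation must respect the nesting of the left family $C^{\rm in}_{m,\rmL},\ldots,C_{1,\rmL},\ldots,C^{\rm out}_{m,\rmL}$ (and the analogous right one), and the lower bound on $|\Delta(\vv\xi^{(j)};\vv\eta^{(j)})|$ must hold along the entire deformation, which depends on keeping $\xi$- and $\eta$-contours strictly separated. I also need to verify that the polynomial degree $P(\vvn)$ grows at worst quadratically in $|\vvn|$, so that the exponential gap wipes it out uniformly; and the Gaussian-type fluctuations around the saddles coming from the $O(L^{3/4})$ correction in the exponent must be integrated against the Vandermonde factors without producing super-polynomial growth in $\vvn$. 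The $z_j$-dependent weights that switch between ``in'' and ``out'' $\xi/\eta$-contours have to be tracked carefully, but since the $z$-contours are fixed circles of radius $>1$, their contribution is purely combinatorial and bounded by $C^{|\vvn|}$.
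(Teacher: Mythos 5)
Your overall strategy — contours through the saddles at $\pm\sqrt L$, the dominant exponential $\exp(-\tfrac43 L^{3/2}\sum_j n_j\wt\tau_j)$, the gap $\sum_j n_j\wt\tau_j\ge 1+\min_j\wt\tau_j$ for $\vvn\ne\vv1$, and final summation against $(\vvn!)^{-2}$ — matches the paper's. But there is a genuine gap in how you bound the Vandermonde/Cauchy-determinant factors uniformly in $\vvn$.

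You assert that the Vandermonde products in $\Pi_\vvn$ contribute ``at most $L^{P(\vvn)}$'' with a residual constant $C^{|\vvn|}$, and you explicitly insist that the integration near the saddles should happen ``without producing super-polynomial growth in $\vvn$.'' That is not what happens. The blocks $\tfrac{\Delta(\vv\xi\topp j)\Delta(\vv\eta\topp j)}{\Delta(\vv\xi\topp j;\vv\eta\topp j)}$ and the cross-level ratios are Cauchy-type determinants, and the uniform bound one actually has — via Hadamard's inequality applied to these determinants with the contours separated by distance of order $L^{-1/4}$ — is of the form
\[
n_1^{n_1/2}\prodd j1{m-1}(n_j+n_{j+1})^{(n_j+n_{j+1})/2}\,n_m^{n_m/2}\cdot\bigl(\mathfrak c_\dist^{-2}L^{1/2}\bigr)^{n_1+\cdots+n_m},
\]
which grows \emph{super-exponentially} in $\vvn$, not like $C^{|\vvn|}$. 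Your phrase ``$(\vvn!)^{-2}$ tames $C^{|\vvn|}$'' then trivializes what is actually the heart of the uniform estimate: one must check via Stirling's formula that the super-exponential Hadamard factors are overcome by $(\vvn!)^{-2}\sim\prod_j n_j^{-2n_j}e^{2n_j}$. The conclusion does hold — $\prod_j n_j^{3n_j/2}/(\vvn!)^2$ is still summable against $C^{|\vvn|}$ — but as written your argument does not establish it, because you have understated the $\vvn$-growth by an unbounded margin.

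A secondary (minor) bookkeeping point: the $L$-powers do not arrive as a single $L^{P(\vvn)}$ before the slack absorbs them. The $(\mathfrak c_\dist^{-1}L^{1/4})^{n_j}$ factors from Hadamard are cancelled exactly against the $L^{-1/4}$ Jacobians of the rescaled $\d\xi,\d\eta$ integrals, leaving only a residual $L^{n_m/2}$ from the extra $(1+|\xi\topp m_{i_m}|)(1+|\eta\topp m_{i_m}|)$ factors in $\Pi_\vvn$. Recording this cancellation explicitly is what makes the ``slack dominates $L^{P(\vvn)}$'' step routine; without it, one would need to argue that an $L$-power whose exponent grows linearly in $|\vvn|$ is absorbed uniformly in $\vvn$, which requires a bit more care (comparing $|\vvn|\log L$ against $L^{3/2}\sum_j n_j\wt\tau_j\gtrsim |\vvn|L^{3/2}$).
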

We prove the three lemmas in Sections \ref{sec:3.1}, \ref{sec:3.2} and \ref{sec:3.3} respectively. For the case $\tau_j$ are not all distinct, the theorem then follows from a bootstrap argument by combining the above and a general lemma in Section \ref{sec:ZP}, which is of its own interest.

Before, in Section \ref{sec:Bb} we present some auxiliary results on Brownian bridges. 
 Throughout, we let $C>0$ denote a constant that may change from line to line, but not depending on $\vvn$ nor $L$. 
\subsection{Auxiliary formula of Brownian bridge}\label{sec:Bb}
Let $\phi_\sigma(x) = (\sqrt{2\pi}\sigma)\inv e^{-x^2/(2\sigma^2)}$ denote the probability density function of a centered Gaussian random variable with variance $\sigma^2, \sigma>0$. 
It is well-known that the joint probability density function of a Brownian bridge $\B^{\rmbr}$ at times $0=a_0<a_1<\cdots<a_{m-1}<a_m = 1$ has the formula
\[
\mathsf p^{\rmbr}_{a_1,\dots,a_{m-1}}\pp{b_1,\dots,b_{m-1}} = \sqrt{2\pi} \prodd j1m \phi_{a_j-a_{j-1}}(b_j-b_{j-1}),
\]
for $b_1,\dots,b_{m-1}\in\R, b_0 =  b_m = 0$.

We will need a formula for the joint cumulative distribution function of $\B^{\rmbr}$ at different times, as shown in the following lemma. Define
\begin{equation}
	\label{eq:def_rf}
\rf(u;a,b):=\exp\left(\frac{1}{2}au^2+bu\right),\qquad u\in\C, a, b\in\R.
\end{equation}
Note the simple identity
\begin{equation}
	\label{eq:aux03}
	\phi_a(b)=\int_{\Gamma} \rf(u;a,b)\frac{\d u}{2\pi\i},
\end{equation}
if $a>0$ and $\Gamma$ is an arbitrary contour parallel to the $y$-axis with upward orientation.

\begin{Lem}
\label{lem:bridge}
Let
$\Gamma_1,\dots,\Gamma_m$ be disjoint contours listed from left to right, each parallel to the $y$-axis with upwards orientation. We have for all $0=a_0<a_1<\cdots<a_m =  1$ and $b_1,\cdots,b_{m-1}\in\R$, $b_0=b_m=0$,  
\begin{equation}
\label{eq:BR_cdf}
\sqrt{2\pi} \int_{\vec\Gamma} \frac{\prod_{j=1}^m \rf(u_j;a_j-a_{j-1},b_{j}-b_{j-1})}{\prod_{j=1}^{m-1}(u_{j+1}-u_j)}
\frac{\d \vvu}{(2\pi\i)^m}
= \proba\pp{\B^{\rmbr}({a_{j}})>b_j, j=1,\dots,m-1},
\end{equation}
with abbreviation $\vec\Gamma \equiv \Gamma_1\times\cdots\times\Gamma_m$ and $\d\vvu \equiv \d u_1\cdots\d u_m$ (with the convention $u_1\in\Gamma_1,\dots,u_m\in\Gamma_m$). 
\end{Lem}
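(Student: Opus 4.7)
The plan is to start from the classical joint density formula for the Brownian bridge displayed just before the lemma, rewrite each Gaussian density factor as a contour integral using identity \eqref{eq:aux03}, and then carry out the $b_j$--integrations explicitly to obtain the stated Cauchy-type kernel $\prod_{j=1}^{m-1}(u_{j+1}-u_j)^{-1}$.

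More precisely, the right-hand side of \eqref{eq:BR_cdf} equals
\[
\sqrt{2\pi}\int_{b_1}^\infty\!\!\cdots\!\int_{b_{m-1}}^\infty \prod_{j=1}^m \phi_{a_j-a_{j-1}}(y_j-y_{j-1})\,\d y_1\cdots\d y_{m-1},
\]
with $y_0=y_m=0$. Applying \eqref{eq:aux03} to each factor, with the $j$th Gaussian represented along the prescribed contour $\Gamma_j$, the integrand becomes $\prod_{j=1}^m \rf(u_j;a_j-a_{j-1},y_j-y_{j-1})$ integrated against $(2\pi\i)^{-m}\d\vvu$. A brief Fubini argument (justified by the super-exponential decay of $\rf(u_j;a_j-a_{j-1},\cdot)$ on the vertical line $\Gamma_j$ since $a_j-a_{j-1}>0$, together with exponential decay in $y_j$ once the real parts are ordered) lets us pull the contour integrals outside the $y$-integrals.

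Next I would collect the linear terms in $y_j$ by the telescoping identity
\[
\sum_{j=1}^m u_j(y_j-y_{j-1}) = \sum_{j=1}^{m-1} y_j(u_j-u_{j+1}),
\]
using $y_0=y_m=0$. Since the contours $\Gamma_1,\dots,\Gamma_m$ are parallel to the imaginary axis and listed left to right, we have $\mathrm{Re}(u_j-u_{j+1})<0$, so each one-dimensional integral evaluates as
\[
\int_{b_j}^\infty e^{y_j(u_j-u_{j+1})}\,\d y_j = \frac{e^{b_j(u_j-u_{j+1})}}{u_{j+1}-u_j}.
\]
Collecting the resulting $b_j$--exponentials and rearranging by the same telescoping trick, now with $b_0=b_m=0$, gives
\[
\sum_{j=1}^{m-1} b_j(u_j-u_{j+1}) = \sum_{j=1}^m (b_j-b_{j-1})u_j,
\]
so that the full integrand reassembles into $\prod_{j=1}^m \rf(u_j;a_j-a_{j-1},b_j-b_{j-1})$ divided by $\prod_{j=1}^{m-1}(u_{j+1}-u_j)$, matching the left-hand side of \eqref{eq:BR_cdf}.

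The only delicate point is the interchange of the $y_j$--integrations with the contour integrals; everything else is bookkeeping. This is where the geometric assumption that the $\Gamma_j$ are ordered from left to right is essential: it simultaneously ensures that the one-dimensional exponential integrals converge and that the resulting Cauchy factors $1/(u_{j+1}-u_j)$ never blow up on the product contour. I expect no other real obstacle.
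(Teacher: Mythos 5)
Your proposal is correct, and it takes a genuinely different route from the paper's. You compute forward: write the probability as an explicit $(m-1)$-fold integral of the joint bridge density, expand each Gaussian factor via the contour identity \eqref{eq:aux03}, apply Fubini, and evaluate the $y_j$-integrals to produce the Cauchy kernel $\prod_{j=1}^{m-1}(u_{j+1}-u_j)^{-1}$; two telescoping identities (one in $y_j$, one in $b_j$) close the loop. The paper argues backward: it defines $Q$ as the contour integral, shows both $Q$ and the probability vanish as any $b_j\to+\infty$, differentiates under the integral sign to show $\partial^{m-1}Q/\partial b_1\cdots\partial b_{m-1}$ equals $(-1)^{m-1}$ times the joint bridge density, and then matches mixed derivatives plus boundary behavior. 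The two arguments are essentially inverse to each other: where the paper differentiates and matches boundary values, you integrate directly. The technical burden shifts accordingly: the paper must justify differentiation under the integral sign, you must justify a Fubini interchange. Your justification for Fubini is sound, and in fact hinges on exactly the point you flag: the telescoping $\sum_j(y_j-y_{j-1})\mathrm{Re}(u_j)=\sum_j y_j(\mathrm{Re}(u_j)-\mathrm{Re}(u_{j+1}))$ together with the left-to-right ordering $\mathrm{Re}(u_j)<\mathrm{Re}(u_{j+1})$ gives absolute integrability in the $y$-variables, while the quadratic term gives super-exponential decay in $\mathrm{Im}(u_j)$. Your computation of $\int_{b_j}^\infty e^{y_j(u_j-u_{j+1})}\d y_j = e^{b_j(u_j-u_{j+1})}/(u_{j+1}-u_j)$ and the sign bookkeeping all check out. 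This is a clean alternative proof; if anything it is slightly more self-contained than the paper's, since it does not need the limiting argument as $b_j\to+\infty$.
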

\begin{proof}
We view the left-hand side of~\eqref{eq:BR_cdf} as a function of $b_1,\cdots,b_{m-1}$, denoted by $Q(b_1,\cdots,b_{m-1})$ below.
First we note that $Q(b_1,\cdots,b_{m-1})$ is well-defined since the integrand is uniformly bounded and decays super-exponentially fast along the integration contours. The function $Q$ is continuous on each $b_i$. Moreover, if we write the integrand of left-hand side of \eqref{eq:BR_cdf} as
\begin{equation}
\label{eq:aux01}
\frac{\prod_{j=1}^m \rf(u_j;a_j-a_{j-1},0)}{\prod_{j=1}^{m-1}(u_{j+1}-u_j)}\cdot \prod_{j=1}^{m-1}\exp\left(b_j(-u_{j+1}+u_j)\right),
\end{equation}
we can see that the integrand converges to $0$ uniformly if one of $b_j\to+\infty$ since $\rmRe(-u_{j+1}+u_j)<0$ by our choice of the contours. Hence $Q(b_1,\cdots,b_{m-1})\to 0$ and
\begin{equation}
	\label{eq:aux04}
	Q(b_1,\cdots,b_{m-1})-\proba\pp{\B^{\rmbr}({a_{j}})>b_j, j=1,\dots,m-1}\to 0
\end{equation}
as any of $b_j\to+\infty$.

Now we consider 
$(\partial^{m-1}/{\partial b_1\cdots \partial b_{m-1}})Q$.
We use the form~\eqref{eq:aux01} of the integrand of $Q$, and
 change the order of derivative and integral to obtain
\begin{equation}
\label{eq:aux02}
\frac{\partial^{m-1}Q(b_1,\cdots,b_{m-1})}{\partial b_1\cdots \partial b_{m-1}}=\sqrt{2\pi}(-1)^{m-1} \int_{\vec\Gamma} \prod_{j=1}^m \rf(u_j;a_j-a_{j-1},b_{j}-b_{j-1})\frac{\d\vvu}{(2\pi\i)^m}.
\end{equation}
To justify the change of the order of integration and differentiation, it suffices to notice that the product $\prod_{j=1}^m \rf(u_j;a_j-a_{j-1},b_{j}-b_{j-1})$ is continuous in $b_j$'s and in $u_j$'s, and decays super-exponentially fast along the integration contours. 

Now we combine~\eqref{eq:aux02} and~\eqref{eq:aux03}, and obtain
\begin{align*}
	\frac{\partial^{m-1}Q(b_1,\cdots,b_{m-1})}{\partial b_1\cdots \partial b_{m-1}}&=\sqrt{2\pi}(-1)^{m-1}\prodd j1m \phi_{a_j-a_{j-1}}(b_j-b_{j-1})\\
	&=(-1)^{m-1}\mathsf p^{\rmbr}_{a_1,\cdots,a_{m-1}}\pp{b_1,\dots,b_{m-1}}\\
	&=\frac{\partial^{m-1}\proba\pp{\B^{\rmbr}({a_{j}})>b_j, j=1,\dots,m-1}}{\partial b_1\cdots \partial b_{m-1}}.
\end{align*}
This and~\eqref{eq:aux04} then  imply $Q(b_1,\cdots,b_{m-1})=\proba\pp{\B^{\rmbr}({a_{j}})>b_j, j=1,\dots,m-1}$. This completes the proof.
\end{proof}

\subsection{Proof of Lemma \ref{lem:1}}\label{sec:3.1}
The case $n_m = 0$ is trivial since the last factor $\sum_{i_j=1}^{n_m} (\xi_{i_m}\topp m-\eta_{i_m}\topp m)$ in  \eqref{eq:Pi_n} becomes zero. 
Note that if $n_m\ne 0$ and $\vvn\in\N_0^m\setminus\N^m$,  then necessarily for some $j=2,\dots,m$, $(n_{j-1},n_j) = (0,n_j)$ with $n_j\in\N$. Consider such a $j$. We have
\begin{align*}
\oint_{>1} & \what {\mathsf D}_{\step,\vvx,\vvtau}\topp{\vvn}(\vvz,\vvh) \frac{\d z_{j-1}}{2\pi \i\cdot z_{j-1}(1-z_{j-1})}
 = 
\oint_{>1} \frac{\d z_{j-1}}{2\pi \i\cdot z_{j-1}(1-z_{j-1})}\\
&\times \Bigg\{\pp{1-\frac1{z_{j-1}}}^{n_j}
\prod_{i_j=1}^{n_j} \pp{\frac1{1-z_{j-1}}\int_{C_{j,\rmL}^\rmin}\frac{\d \xi_{i_j}\topp j}{2\pi\i} - \frac{z_{j-1}}{1-z_{j-1}}\int_{C_{j,\rmL}^\rmout}\frac{\d\xi_{i_j}\topp j}{2\pi \i}}\\
&\times \pp{\frac1{1-z_{j-1}}\int_{C_{j,\rmR}^\rmin}\frac{\d \eta_{i_j}\topp j}{2\pi\i} - \frac{z_{j-1}}{1-z_{j-1}}\int_{C_{j,\rmR}^\rmout}\frac{\d\eta_{i_j}\topp j}{2\pi \i}}\Bigg\}\times (\cdots),
\end{align*}
where we skipped the factor without the variable $z_{j-1}$. Note that the integrand is a rational function of $z_{j-1}$ with degree at most $-2-n_j\le -3$. We also note that the contour of the integral with respect to $z_{j-1}$ can be deformed to an arbitrary large circle without crossing any pole. Thus the above integral equals to zero by enlarging the contour to infinity.
\subsection{Proof of Lemma \ref{lem:2}}\label{sec:3.2}
We first simplify $\what Q\topp{\vv1}_{\step,\vvx,\vvtau}$ (recalling \eqref{eq:Q_step n}) in Lemma \ref{lem:Q_hat_11} below. 
Now with $\vvn = \vv1$, for each $j=1,\dots,m$, $\vv\xi\topp j = (\xi\topp j_1)$ is one-dimensional. So we simply write $\xi\topp j \equiv \xi\topp j_1, j=1,\dots,m$ and similarly for $\eta\topp j$, and 
\[
\vec\xi\topp m = (\xi\topp1,\dots,\xi\topp m) \qmand \vec\eta\topp m = (\eta\topp1,\dots,\eta\topp m).
\]

\begin{Lem}
\label{lem:Q_hat_11}	
We have
\equh
	\label{eq:Q_hat_11}
	\what Q_{\step,\vvx_L,\vvtau}\topp{\vv1}(\vvh_L) = (-1)^{m-1}\int_{\vec C}  
	\Pi_\vv1(\vec\xi\topp m,\vec\eta\topp m)\times 
	\prodd j1m \frac{f_{\wt x_{L,j},\wt \tau_j}(\xi^{(j)},\wt h_{L,j})}{f_{\wt x_{L,j},\wt \tau_j}(\eta^{(j)},\wt h_{L,j})}
	\frac{\d\vec\xi\topp m\d\vec\eta\topp m}{(2\pi \i)^{2m}},
	\eque
	with
	\[ \vec C := C_{1,\rmL}\times C_{2,\rmL}^\rmout\times\cdots\times C_{m,\rmL}^\rmout\times C_{1,\rmR}\times C_{2,\rmR}^\rmout\times\cdots\times C_{m,\rmR}^\rmout,
\]
and
\begin{equation}
	\label{eq:Pi_1}
	\Pi_{\vv1}(\vec\xi\topp m,\vec\eta\topp m):= 
	(-1)^m \prodd j1{m-1}\frac{(\xi\topp j-\eta\topp{j+1})(\eta\topp j-\xi\topp{j+1})}{(\xi\topp j-\xi\topp{j+1})(\eta\topp j-\eta\topp{j+1})} \frac1{(\xi\topp j-\eta\topp j)^2}
	\times \frac{1}{\xi\topp m-\eta\topp m}.
\end{equation}
\end{Lem}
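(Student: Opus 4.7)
The plan is to exploit that, with $\vvn=\vv 1$, the dependence of the integrand~\eqref{eq:whatD} on each auxiliary variable $z_{j-1}$ is confined to the $j$-th factor of the product over $j=2,\dots,m$; consequently the contour integrals over $z_1,\dots,z_{m-1}$ in~\eqref{eq:Q_step n} can be evaluated independently by residues, and each such integration should kill the inner contours $C_{j,\rmL}^{\rmin}, C_{j,\rmR}^{\rmin}$ and pin $\xi\topp j,\eta\topp j$ to the outer contours $C_{j,\rmL}^{\rmout}, C_{j,\rmR}^{\rmout}$.

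Fix $j\in\{2,\dots,m\}$. With $n_{j-1}=n_j=1$, the $z_{j-1}$-dependent prefactor reads $(1-z_{j-1})(1-1/z_{j-1})$, and the double integral over $\xi\topp j,\eta\topp j$ expands as a linear combination of the four integrals obtained by choosing $C_{j,\rmL}^{\rmin}$ or $C_{j,\rmL}^{\rmout}$ for $\xi\topp j$ and $C_{j,\rmR}^{\rmin}$ or $C_{j,\rmR}^{\rmout}$ for $\eta\topp j$. The coefficient of each such option, combined with the measure $\tfrac{\d z_{j-1}}{2\pi\i\,z_{j-1}(1-z_{j-1})}$, is a rational function of $z_{j-1}$ whose only poles are at $0$ and $1$; summing residues inside a circle $|z_{j-1}|=r>1$, one verifies that the in--in, in--out and out--in coefficients each integrate to $0$ while the out--out coefficient integrates to $1$. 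Iterating over $j=2,\dots,m$ therefore collapses every $z$-integration and reduces $\what Q\topp{\vv 1}_{\step,\vvx_L,\vvtau}(\vvh_L)$ to the $2m$-fold integral over the product contour $\vec C$ stated in the lemma, the overall sign $(-1)^{m-1}$ being preserved because each residue evaluation contributes a factor of $1$.

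To finish, I would specialize~\eqref{eq:Pi_n} at $\vvn=\vv 1$: each empty Vandermonde $\Delta(\vv\xi\topp j)$ equals $1$, each two-variable Vandermonde reduces to a single difference, and the trailing sum $\sum_{i_m=1}^{n_m}(\xi^{(m)}_{i_m}-\eta^{(m)}_{i_m})$ collapses to $\xi\topp m-\eta\topp m$, which cancels one of the two powers of $(\xi\topp m-\eta\topp m)$ in the denominator coming from $\Delta(\vv\xi\topp m;\vv\eta\topp m)^2$. This yields precisely~\eqref{eq:Pi_1}. The main delicate piece of the argument is really the residue bookkeeping for the four cases above, where sign tracking is easy to get wrong; the exchange of $z$- and $(\xi,\eta)$-integrals that it relies on is routine Fubini, since the $z$-contours are compact and $f_{\wt x_j,\wt\tau_j}$ decays super-exponentially along the non-intersecting $\xi,\eta$ contours.
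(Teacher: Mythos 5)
Your proof is correct, and it reorganizes the residue argument in a way that differs slightly from the paper's. The paper's proof takes all $m-1$ circles at common radius $R$, identifies the all-``out'' term in the $4^{m-1}$-fold expansion as the unique $O(R^{m-1})$ contribution, integrates it to get the stated formula, and then lets $R\to\infty$ so that the remaining terms (of order $O(R^{m-2})$ after dividing by $\prod_j z_j(1-z_j)$) contribute $O(R^{-1})\to 0$. Your approach instead fixes a finite radius $r>1$ and evaluates each $z_{j-1}$-integral by residues in isolation, which is legitimate because, as you note, for $\vvn=\vv1$ each $z_{j-1}$ appears only in the $j$-th factor of the product in \eqref{eq:whatD}, so after Fubini the $z$-integrations factorize over $j$. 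Your residue bookkeeping checks out: writing the coefficient of the four in/out options for the $j$-th pair, multiplying by the prefactor $(1-z_{j-1})(1-1/z_{j-1})$ and the measure $\tfrac1{z_{j-1}(1-z_{j-1})}$, one gets $-1/(z^2(1-z))$, $1/(z(1-z))$, $1/(z(1-z))$, $-1/(1-z)$ respectively; the first three vanish on $|z|=r>1$ (they decay as $1/z^2$ or faster) while the last has residue $1$. Your specialization of $\Pi_{\vvn}$ to $\Pi_{\vv1}$, with the trailing sum cancelling one power of $\xi\topp m-\eta\topp m$, is also correct. The upshot is that your version avoids the growth-order bookkeeping at $R\to\infty$ at the cost of writing out the four residues explicitly; both are entirely standard and the conclusions agree.
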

\begin{proof}
We first remark that~\eqref{eq:Pi_1} is the same as $\Pi_\vvn$ in~\eqref{eq:Pi_n} when $\vvn=\vv1$.
For \eqref{eq:Q_hat_11}, recall~\eqref{eq:Q_step n} and we choose the contours to be $|z_1|=|z_2|=\cdots=|z_m|=R$ and let $R$ become large. Note that if we expand the product of the integrals in~\eqref{eq:whatD} with $\vvn=\vv1$ as a summation of $2m$-multiple integrals, we find that $\what {\mathsf D}\topp{\vv1}_{\step,\vvx,\vvtau}(\vvz,\vvh)$ has the following leading term from the integral corresponding to $\vec C$ of order $O(R^{m-1})$, 
\[
\prodd j1{m-1}(1-z_j)\int_{\vec C}\Pi_\vv1(\vec\xi\topp m,\vec\eta\topp m)\times \prodd j1m \frac{f_{\wt x_j,\wt \tau_j}(\xi^{(j)},\wt h_j)}{f_{\wt x_j,\wt \tau_j}(\eta^{(j)},\wt h_j)}\frac{\d\vec\xi\topp m}{(2\pi\i)^m}\frac{\d\vec\eta\topp m}{(2\pi\i)^m},
\]
	plus an error term (the sum of the rest integrals) of order $O(R^{m-2})$.
	The integration of the leading term above with respect to $z_j, j=1,\dots,m-1$ yields desired right-hand side of \eqref{eq:Q_hat_11}. Similarly, the integration of each of the rest $2m$-multiple integrals  is bounded by
\[
	\oint_{|z_1|=R}\cdots\oint_{|z_m|=R} O(R^{m-2}) \frac{|\d z_1|}{2\pi R |1-z_1|}\cdots \frac{|\d z_{m-1}|}{2\pi R |1-z_{m-1}|}=O(R^{-1})
\]
since $\int_{|z|=R}\frac{\d z}{|1-z|}$ is bounded by a constant for large $R$. Letting $R\to\infty$, we see that the upper bound goes to zero, and hence the integration of the error term must be zero. We have proved~\eqref{eq:Q_hat_11}.
\end{proof}

Since we do not have any contours $C_{j,\rmL}^{\rm in}$ or $C_{j,\rmR}^{\rm in}$ in the formula~\eqref{eq:Q_hat_11}, we drop the superscripts $\rm out$ in the integration contours of~\eqref{eq:Q_hat_11} in this subsection for notation simplification. In other words, $C_{j,\rmL}=C_{j,\rmL}^{\rm out}$ and $C_{j,\rmR}=C_{j,\rmR}^{\rm out}$ for $j=2,\cdots,m$ in this subsection.

\bigskip
In the next step, we compute the asymptotics of $\what Q_{\step,\vvx_L,\vvtau}\topp{\vv1}(\vvh_L)$ using \eqref{eq:Q_hat_11} and prove Lemma~\ref{lem:2}. We need to deform the integration contours in~\eqref{eq:Q_hat_11} as follows (now depending on $L$): for $j=1,\dots,m$,
\[
C_{j,\rmL,L}:= -\sqrt{L}+ 2^{-1/2}L^{-1/4}\Sigma_{j,\rmL},\quad C_{j,\rmR,L}:=\sqrt{L} +2^{-1/2}L^{-1/4}\Sigma_{j, \rmR}
\]
where
\begin{equation}
\label{eq:Sigma}
	\Sigma_{j,\rmL}:=\{j+ re^{\pm 2\pi\i/3 }: r>0\},\quad \text{and } \Sigma_{j, \rmR}:=-\Sigma_{j, \rmL}.
\end{equation}
The orientations of $\Sigma_{\rmL}$ and $\Sigma_{\rmR}$ in the integrals below are from $\infty e^{-2\pi\i/3}$ to $\infty e^{2\pi \i/3}$, and from $\infty e^{-\pi\i/3}$ to $\infty e^{\pi\i/3}$ respectively. With these contours, 
for $\xi_L^{(j)}\in C_{j,\rmL,L}$ and $\eta_L^{(j)}\in C_{j,\rmR,L}$ 
we can parametrize 
\[
	\xi_L^{(j)} = \xi_L\topp j(u_j)= -\sqrt{L} + 2^{-1/2}L^{-1/4}u_j,\quad \eta_L^{(j)}=\eta_L\topp j(v_j) = \sqrt{L} + 2^{-1/2}L^{-1/4} v_j,
\]
for some $u_j\in \Sigma_{j,\rmL}$ and $v_j\in \Sigma_{j,\rmR}$. 
Recall the definitions of $f$ in~\eqref{eq:def_f} and $\rf$ in~\eqref{eq:def_rf}, and note the scaling of the parameters~\eqref{eq:h_L,j}. We have, after a direct computation,
\begin{align}
\label{eq:f}
	f_{\wt x_{L,j},\wt \tau_j}\left(\xi_L^{(j)},\wt h_{L,j}\right)
	&=\exp\left(-\frac13\wt\tau_j(\xi_L^{(j)})^3 +\wt x_{L,j}(\xi_L^{(j)})^2 +\wt h_{L,j}\xi_L^{(j)}\right)\\
	&=\exp\left(-\frac{2}{3}\wt \tau_jL^{3/2}+\left(\frac{\wt \rx_j}{\sqrt{2}}-\sqrt{2}\wt \rh_j\right)L^{3/4}\right)
	\cdot \rf(u_j;\wt \tau_j,\wt\rh_j-\wt\rx_j)\cdot g_L(u_j;\tilde \tau_j,\tilde\rx_j),\nonumber
\end{align}
and similarly,
\[
		f_{\wt x_{L,j},\wt \tau_j}\left(\eta_L^{(j)},\wt h_{L,j}\right)=\exp\left(\frac{2}{3}\wt\tau_jL^{3/2}+\left(\frac{\wt\rx_j}{\sqrt{2}}+\sqrt{2}\wt\rh_j\right)L^{3/4}\right)
		\cdot \frac1{\rf(v_j;\wt\tau_j,-\wt\rh_j-\wt \rx_j)}\cdot  g_L(v_j;\tilde \tau_j,\tilde\rx_j),
\]
where $\wt\tau_j=\tau_j-\tau_{j-1}$, $\wt x_j=x_j-x_{j-1}$, $\wt h_j=h_j-h_{j-1}$, and $\wt\rx_j=\rx_j-\rx_{j-1}$, $\wt\rh_j=\rh_j-\rh_{j-1}$, and 
\[
	g_L(w;\tilde\tau,\tilde\rx):=\exp\pp{\left(-\frac{1}{6\sqrt{2}}\tilde\tau w^3+\frac{\tilde\rx}{2\sqrt{2}}w^2\right)L^{-3/4}}.
\]
We see that $g_L(w;\tilde\tau,\tilde\rx)$ decays super-exponentially fast when $w\in\Sigma_{j,\rmL}\to\infty$ and grows super-exponentially fast when $w\in\Sigma_{j,\rmR}\to\infty$, if $\tilde\tau>0$.

Note that $\sum_{j=1}^m\wt\tau_j=\tau_m=1$, $\sum_{j=1}^m\wt \rx_j=\rx_m=0$ and $\sum_{j=1}^m\wt \rh_j=\rh_m=0$. Hence taking the product over $j=1,\dots,m$ we have
\begin{align}
	\label{eq:prod_f_xi}
	\prod_{j=1}^mf_{\wt x_{L,j},\wt \tau_j}\left(\xi_L^{(j)},\wt h_{L,j}\right) &=e^{-\frac{2}{3}L^{3/2}}\prod_{j=1}^m\rf(u_j;\wt \tau_j,\wt\rh_j-\wt\rx_j)\cdot g_L(u_j;\tilde \tau_j,\tilde\rx_j),\\
		\label{eq:prod_f_eta}
	\prod_{j=1}^mf_{\wt x_{L,j},\wt \tau_j}\left(\eta_L^{(j)},\wt h_{L,j}\right) & =e^{\frac{2}{3}L^{3/2}}\prod_{j=1}^m\frac1{\rf(v_j;\wt \tau_j,-\wt\rh_j-\wt\rx_j)}\cdot g_L(v_j;\tilde \tau_j,\tilde\rx_j).
\end{align}
We also write~\eqref{eq:Pi_1} as
\begin{equation}
	\label{eq:Pi_1_rewrite}
	\Pi_{\vv1}(\vec\xi_L\topp m,\vec\eta_L\topp m)=2^{m-2}L^{m/2-1}J_L(u_1,\cdots,u_m,v_1,\cdots,v_m)
\end{equation}
where
\begin{equation}
	\label{eq:J01}
	\begin{split}
	&J_L(u_1,\cdots,u_m,v_1,\cdots,v_m)\\
	&:=
	\prod_{j=1}^{m-1}\frac{(1-2^{-3/2}(u_j-v_{j+1})L^{-3/4})(1+2^{-3/2}(v_j-u_{j+1})L^{-3/4})}{(u_j-u_{j+1})(v_j-v_{j+1})(1-2^{-3/2}(u_j-v_{j})L^{-3/4})^2}\cdot \frac{1}{1-2^{-3/2}(u_m-v_{m})L^{-3/4}}.
\end{split}
\end{equation}
Note also that $\d\vec\xi\topp m\d\vec\eta\topp m = 2^{-m}L^{-m/2}\d\vv u\d\vvv$. 
By inserting~\eqref{eq:prod_f_xi},~\eqref{eq:prod_f_eta} and~\eqref{eq:Pi_1_rewrite} in~\eqref{eq:Q_hat_11}, we arrive at
\begin{multline*}
	\frac{\what Q_{\step,\vvx_L,\vvtau}\topp{\vv1}(\vvh_L)}{(8\pi L)^{-1}e^{-\frac{4}{3}L^{3/2}}}=2\pi (-1)^{m-1}\int_{\vec\Sigma_{\rmL}\times\vec\Sigma_{\rmR}}
	\prod_{j=1}^m\pp{\rf(u_j;\wt \tau_j,\wt\rh_j-\wt\rx_j)\cdot \rf(v_j;\wt \tau_j,-\wt\rh_j-\wt\rx_j)}\\
		\times J_L(u_1,\cdots,u_m,v_1,\cdots,v_m)\cdot\frac{g_L(u_j;\tilde \tau_j,\tilde\rx_j)}{g_L(v_j;\tilde \tau_j,\tilde\rx_j)}\frac{\d \vvu}{(2\pi \i)^m}\frac{\d\vvv}{(2\pi\i)^m},
\end{multline*}
with $\vec\Sigma_{\rmL/\rmR} :=\Sigma_{1,\rmL/\rmR}\times \cdots\times\Sigma_{m,\rmL/\rmR}$.
Note that $\prod_{j=1}^m(\rf(u_j;\wt \tau_j,\wt\rh_j-\wt\rx_j)\cdot \rf(v_j;\wt \tau_j,\wt\rh_j+\wt\rx_j))$ is uniformly bounded, integrable along the integration contours, and not depending on $L$.
Note that for fixed $u_j$ and $v_j$, $1\le j\le m$, we have
\begin{equation*}
	\begin{split}
	&J_L(u_1,\cdots,u_m,v_1,\cdots,v_m)\to \prod_{j=1}^{m-1}\frac{1}{(u_j-u_{j+1})(v_j-v_{j+1})},\\
	&g_L(u_j;\tilde \tau_j,\tilde\rx_j)\to 1, \qquad g_L(v_j;\tilde \tau_j,\tilde\rx_j)\to 1,
	 \end{split}
\end{equation*}
as $L\to\infty$. 
Moreover, we can see that
\begin{equation}
\label{eq:DCT}
	\sup_L\sup_{\substack{u_j\in \Sigma_{j,\rmL},\\ v_j\in\Sigma_{j,\rmR},\\ j=1,\dots,m}}\abs{	J_L(u_1,\cdots,u_m,v_1,\cdots,v_m)\prod_{j=1}^m\frac{g_L(u_j;\tilde \tau_j,\tilde\rx_j)}{g_L(v_j;\tilde \tau_j,\tilde\rx_j)}}<\infty.
\end{equation}
Indeed, we have $|u_j-u_{j+1}|\ge \dist(\Sigma_{j,\rmL},\Sigma_{j+1,\rmL})=\sqrt 3/2$ and similarly $|v_j-v_{j-1}|\ge \sqrt 3/2$. Moreover, $|1-2^{-3/2}(u_j-v_j)L^{-3/4}|\ge \Re (1-2^{-3/2}(u_j-v_j)L^{-3/4})\ge 1$. Plugging these bounds to~\eqref{eq:J01} and using the simple inequality $|1+a+b|\le (1+|a|)(1+|b|)$, we obtain
\begin{equation}
	\label{eq:J02}
	|J_L (u_1,\cdots,u_m,v_1,\cdots,v_m)|\le C\prod_{j=1}^{m}(1+2^{-3/2}|u_j|L^{-3/4})^2(1+2^{-3/2}|v_j|L^{-3/4})^2
\end{equation}
for some constant $C$ independent of $L$. Finally, we see that $(1+2^{-3/2}|u_j|L^{-3/4})^2 g_L(u_j;\tilde\tau_j,\tilde \rx_j)$ and $(1+2^{-3/2}|v_j|L^{-3/4})^2/ g_L(v_j;\tilde\tau_j,\tilde \rx_j)$ are uniformly bounded due to the super-exponential decay of  $g_L(u_j;\tilde\tau_j,\tilde \rx_j)$ when $u_j\in\Sigma_{j,\rmL}\to\infty$ and $1/g_L(v_j;\tilde\tau_j,\tilde\rx_j)$ when $v_j\in\Sigma_{j,\rmR}\to\infty$. Together with~\eqref{eq:J02} we obntain~\eqref{eq:DCT}.

Now we apply the bounded convergence theorem, and have
\begin{multline}
	\label{eq:limit_Q_step1}
	\lim_{L\to\infty}\frac{\what Q_{\step,\vvx_L,\vvtau}\topp{\vv1}(\vvh_L)}{(8\pi L)^{-1}e^{-\frac{4}{3}L^{3/2}}}\\
	=2\pi (-1)^{m-1}\int_{\vec\Sigma_{\rmL}}\frac{\prod_{j=1}^m\rf(u_j;\wt \tau_j,\wt\rh_j-\wt\rx_j)}{\prod_{j=1}^{m-1}(u_j-u_{j+1})}\frac{\d\vvu}{(2\pi \i)^m}
	\int_{\vec\Sigma_{\rmR}}\frac{\prod_{j=1}^m\rf(v_j;\wt \tau_j,-\wt\rh_j-\wt\rx_j)}{\prod_{j=1}^{m-1}(v_j-v_{j+1})}\frac{\d\vvv}{(2\pi\i)^m}.
\end{multline} 

We remark that now in the expression of the right-hand side of \eqref{eq:limit_Q_step1},  we are free to deform the contours $\Sigma_{j,\rmL}$ and $\Sigma_{j,\rmR}$ on the right-hand side of the above equation to vertical lines as long as the order of the contours are not changed. Therefore, we apply Lemma~\ref{lem:bridge} and obtain
\[
\sqrt{2\pi} \int_{\vec\Sigma_{\rmL}}\frac{\prod_{j=1}^m\rf(u_j;\wt \tau_j,\wt\rh_j-\wt\rx_j)}{\prod_{j=1}^{m-1}(u_j-u_{j+1})}
\frac{\d\vvu}{(2\pi \i)^m}
=\proba\pp{\B^{\rmbr}({\tau_{j}})>\rh_j-\rx_j, j=1,\dots,m-1}.
\]Applying Lemma~\ref{lem:bridge} again but with the indices $j\to m+1-j$ (due to the different order of the contours), we obtain
\begin{align*}
		\sqrt{2\pi}(-1)^{m-1}\int_{\vec\Sigma_{\rmR}} \frac{\prod_{j=1}^m\rf(v_j;\wt \tau_j,-\wt\rh_j-\wt\rx_j)}{\prod_{j=1}^{m-1}(v_j-v_{j+1})}
		\frac{\d\vvv}{(2\pi\i)^m}
		&=\proba\pp{\B^{\rmbr}({1-\tau_{j}})>\rh_j+\rx_j, j=1,\dots,m-1}\\
		&=\proba\pp{\B^{\rmbr}(\tau_{j})>\rh_j+\rx_j, j=1,\dots,m-1},
\end{align*}
where the last step follows by the reversibility of Brownian bridge.
Inserting the above formulas to~\eqref{eq:limit_Q_step1}, we obtain
\begin{align*}
		\lim_{L\to\infty}& \frac{\what Q_{\step,\vvx_L,\vvtau}\topp{\vv1}(\vvh_L)}{(8\pi L)^{-1}e^{-\frac{4}{3}L^{3/2}}}\\
		& =\proba\pp{\B^{\rmbr}({\tau_{j}})>\rh_j-\rx_j, j=1,\dots,m-1}\proba\pp{\B^{\rmbr}(\tau_{j})>\rh_j+\rx_j, j=1,\dots,m-1}\\
		& = \mathsf Q_{\vvrx,\vvtau}(\vvrh),
\end{align*} 
as desired.

\subsection{Proof of Lemma \ref{lem:3}}\label{sec:3.3}
Recall $\what Q\topp\vvn_{\step,\vvx,\vvtau}(\vvh)$ in \eqref{eq:Q_step n} and $\what {\mathsf D}_{\step,\vvx,\vvtau}\topp\vvn$ in \eqref{eq:whatD}. 
For every $\vvn\in\N^m\setminus\{\vv1\}$,  we first control
\begin{align}
|\what{\mathsf D}\topp\vvn_{\step,\vvx_L,\vvtau}(\vvz,\vvh_L)|
& \le \prod_{i_1=1}^{n_1}\int_{C_{1,\rmL}}\frac{|\d\xi_{i_1}\topp 1|}{2\pi}
\prod_{i_1=1}^{n_1}\int_{C_{1,\rmR}}\frac{|\d\eta_{i_1}\topp 1|}{2\pi}
\nonumber\\
& ~\times
  \prodd j2m\Bigg[|1-z_{j-1}|^{n_{j-1}}\abs{1-\frac1{|z_{j-1}|}}^{n_{j}}\nonumber\\
& ~\times \prod_{i_j=1}^{n_j}\pp{\frac1{|1-z_{j-1}|}\int_{C_{j,\rmL}^{\rm in}}\frac{|\d\xi_{i_j}\topp j|}{2\pi}+ \frac{|z_{j-1}|}{|1-z_{j-1}|}\int_{C_{j,\rmL}^{\rm out}}\frac{|\d\xi_{i_j}\topp j|}{2\pi}}\nonumber\\
& ~\times \pp{\frac1{|1-z_{j-1}|}\int_{C_{j,\rmR}^{\rm in}}\frac{|\d\eta_{i_j}\topp j|}{2\pi}+ \frac{|z_{j-1}|}{|1-z_{j-1}|}\int_{C_{j,\rmR}^{\rm out}}\frac{|\d\eta_{i_j}\topp j|}{2\pi}}\Bigg]
\nonumber\\
& ~\times|\Pi_\vvn(\vec{\vv\xi}\topp m_L,\vec{\vv\eta}\topp m_L)|
\times 
\prodd j1m \prodd {i_j}1{n_j}\abs{\frac{f_{\wt x_{L,j},\wt \tau_j}(\xi_{i_j}^{(j)},\wt h_{L,j})}{f_{\wt x_{L,j},\wt \tau_j}(\eta_{i_j}^{(j)},\wt h_{L,j})}}
. \label{eq:3 factors}
\end{align}
Again, the above is a compact way of writing the sum of $2^{n_1+\cdots+n_m}$ number of $(2n_1+\cdots+2n_m)$-multiple integrals with respect to $\xi\topp j_{i_j}, \eta\topp j_{i_j},j=1,\dots,m, i_j=1,\dots,n_j$. 
This time we consider
\[
C_{j,\rmL}^{\rmin/\rmout} = \ccbb{-\sqrt{L}+ 2^{-1/2}L^{-1/4} u:u\in \Sigma_{j,\rmL}^{\rmin/\rmout}}
\]
with each $
\Sigma_{j,\rmL}^{\rmin/\rmout}$ defined as before in \eqref{eq:Sigma}. 
The contours are then disjoint, and the minimal distance among all the pairs is denoted by $\mathfrak c_{\dist} L^{-1/4}$ with $\mathfrak c_{\dist}=\sqrt 3/2$.

As argued in \cite{liu22when}, the first two products in $\Pi_\vvn(\vec{\vv\xi}\topp m_L,\vec{\vv\eta}_L\topp m)$   in \eqref{eq:Pi_n}, where we recall
\[
\vec{\vv\xi}\topp m_L = \pp{\vv\xi\topp 1_L,\dots,\vv\xi\topp m_L} \qmwith \vv\xi\topp j_L = \pp{\xi\topp j_{L,1},\dots,\xi\topp j_{L,n_j}}\in \pp{C_{j,\rmL}^{\rmin/\rmout}}^{n_j}, j=1,\dots,m, 
\] can be re-written as
\begin{multline*}
\prodd j1{m-1}\frac{\Delta(\vv\xi\topp j;\vv\eta\topp{j+1})\Delta(\vv\eta\topp j;\vv\xi\topp {j+1})}{\Delta(\vv\xi\topp j;\vv\xi\topp{j+1})\Delta(\vv\eta\topp j;\vv\eta\topp {j+1})} \prodd j1m \pp{\frac{\Delta(\vv\xi\topp j)\Delta(\vv\eta\topp j)}{\Delta(\vv\xi\topp j;\vv\eta\topp j)}}^2 
= \frac{\Delta(\vv\xi\topp 1)\Delta(\vv\eta\topp 1)}{\Delta(\vv\xi\topp 1;\vv\eta\topp 1)}\\
\times \prodd j1{m-1}\frac{\Delta(\vv\xi\topp j;\vv\eta\topp{j+1})\Delta(\vv\eta\topp j;\vv\xi\topp {j+1})}{\Delta(\vv\xi\topp j;\vv\xi\topp{j+1})\Delta(\vv\eta\topp j;\vv\eta\topp {j+1})} \frac{\Delta(\vv\xi\topp j)\Delta(\vv\eta\topp j)}{\Delta(\vv\xi\topp j;\vv\eta\topp j)}\frac{\Delta(\vv\xi\topp {j+1})\Delta(\vv\eta\topp {j+1})}{\Delta(\vv\xi\topp {j+1};\vv\eta\topp {j+1})}\times \frac{\Delta(\vv\xi\topp m)\Delta(\vv\xi\topp m)}{\Delta(\vv\xi\topp m;\vv\eta\topp m)},
\end{multline*} and each of the three terms can be interpreted as, up to a possible sign depending on $\vvn$, a determinant. Then by Hadamard's inequality, we have (e.g.~compare with $B_1,B_2,B_3$ in  \cite[Section 3.1]{liu21one}),
\[
\abs{\frac{\Delta(\vv\xi\topp 1)\Delta(\vv\eta\topp 1)}{\Delta(\vv\xi\topp 1;\vv\eta\topp 1)}} \le n_1^{n_1/2}\pp{\frac1{\mathfrak c_\dist L^{-1/4}}}^{n_1},\quad
\abs{\frac{\Delta(\vv\xi\topp m)\Delta(\vv\eta\topp m)}{\Delta(\vv\xi\topp m;\vv\eta\topp m)}} \le n_m^{n_m/2}\pp{\frac1{\mathfrak c_\dist L^{-1/4}}}^{n_m},
\]
and
\begin{multline*}
\abs{\frac{\Delta(\vv\xi\topp j;\vv\eta\topp{j+1})\Delta(\vv\eta\topp j;\vv\xi\topp {j+1})}{\Delta(\vv\xi\topp j;\vv\xi\topp{j+1})\Delta(\vv\eta\topp j;\vv\eta\topp {j+1})} \frac{\Delta(\vv\xi\topp j)\Delta(\vv\eta\topp j)}{\Delta(\vv\xi\topp j;\vv\eta\topp j)}\frac{\Delta(\vv\xi\topp {j+1})\Delta(\vv\eta\topp {j+1})}{\Delta(\vv\xi\topp {j+1};\vv\eta\topp {j+1})}}\\
 \le (n_j+n_{j+1})^{(n_j+n_{j+1})/2} \pp{\frac1{\mathfrak c_\dist L^{-1/4}}}^{n_j+n_{j+1}}, j=1,\dots,m-1.
\end{multline*}
We also bound 
$\sabs{\sum_{i_m=1}^{n_m}(\xi_{L,i_m}\topp m- \eta_{L,i_m}\topp m)} \le \prod_{i_m=1}^{n_m}\spp{(1+|\xi_{L,i_m}\topp m|)(1+|\eta_{L,i_m}\topp m|)}$.  
Therefore,
\begin{multline}\label{eq:L}
|\Pi_\vvn(\vec{\vv\xi}\topp m_L,\vec{\vv\eta}\topp m_L)| \le n_1^{\frac{n_1}2}\pp{\prodd j1{m-1}(n_j+n_{j+1})^{\frac{n_j+n_{j+1}}2}}n_m^{\frac{n_m}2} \cdot (\mathfrak c_\dist^{-2}L^{1/2})^{n_1+\cdots+n_m}\\
\times \prod_{i_m=1}^{n_m}\pp{(1+|\xi_{L,i_m}\topp m|)(1+|\eta_{L,i_m}\topp m|)}.
\end{multline}

Next, we examine  the last term in \eqref{eq:3 factors}. We eventually shall integral over $\Sigma_{j,\rmL/\rmR}$ and we view 
\[
\xi_{L,i_j}\topp j  =\xi_{L,i_j}\topp j (u\topp j_{i_j}) = \sqrt L + 2^{-1/2}L^{-1/4} u_{i_j}\topp j \quad\mbox{ for } \quad u\topp j_{i_j}\in \Sigma_{j,\rmL}^{\rmin/\rmout}, \quad j=1,\dots,m, i_j=1,\dots,n_j,
\] as a function of $u\topp j_{i_j}$. Recalling the analysis of $f_{\wt x_{L,j},\wt\tau_j}$ in \eqref{eq:f}, we have
\begin{align*}
\prodd {i_j}1{n_j}f_{\wt x_{L,j},\wt \tau_j}(\xi_{L,i_j}^{(j)},\wt h_{L,j}) & = \exp\pp{\summ j1m\sum_{i_j=1}^{n_j}\pp{-\frac23 \wt\tau_j L^{3/2} + \pp{\frac{\wt\rx_j}{\sqrt 2} - \sqrt 2\wt\rh_j}L^{3/4}}}\\
& \quad\times \prodd j1m\prod_{i_j=1}^{n_j} \rf \pp{u\topp j_{i_j};\wt\tau_j,\wt\rh_j-\wt\rx_j}g_L\pp{u\topp j_{i_j};\wt\tau_j-\wt\rx_j}.
\end{align*}
We see that for every $\epsilon>0$, one can take $L$ large enough so that
\[
-\frac23 \wt\tau_j L^{3/2} + \pp{\frac{\wt\rx_j}{\sqrt 2} - \sqrt 2\wt\rh_j}L^{3/4} \le -\frac{2(1-\epsilon)}3 \wt\tau_j L^{3/2}, j=1,\dots, m.
\]
We shall assume the above holds in the sequel. We have also seen that $\sup_{L>0, u_j\in\Sigma_{j,\rmL}^{\rmin/\rmout}}|g_L(u\topp j_{i_j};\wt\tau_j-\wt\rx_j)|<\infty$ (see \eqref{eq:DCT}). Similar analysis applies to $\prodd {i_j}1{n_j}f_{\wt x_{L,j},\wt \tau_j}(\xi_{L,i_j}^{(j)},\wt h_{L,j})$.
We have, for every $\epsilon>0$ fixed and then for $L$ large enough,
\begin{align}\label{eq:D hat n}
|\what{\mathsf D}\topp\vvn_{\step,\vvx_L,\vvtau}(\vvz,\vvh_L)|
& \le 
C\exp\pp{-\frac {4(1-\epsilon)}3 \summ j1m \wt\tau_jn_j L^{3/2}}
\\
& \quad\times  \prodd j2m|1-z_{j-1}|^{n_{j-1}}\abs{1-\frac{1}{z_{j-1}}}^{n_{j}}\nonumber\\
& \quad\times
n_1^{\frac{n_1}2}\pp{\prodd j1{m-1}(n_j+n_{j+1})^{\frac{n_j+n_{j+1}}2}}n_m^{\frac{n_m}2} \cdot  C^{n_1+\cdots+n_m} \times \mathfrak C_n(L)\mathfrak D_n(L),\nonumber
\end{align}
with
\begin{align*}
\mathfrak C_n(L) & = \prod_{i_1=1}^{n_1}\int_{\Sigma_{1,\rmL}}\abs{\rf\pp{u_{i_1}\topp 1;\wt\tau_1,\wt\rh_1-\wt\rx_1}}\d |u\topp 1_{i_1}|\\
& \quad\times \prodd j2{m-1}\prod_{i_j=1}^{n_j}
\int_{\Sigma_{j,\rmL}^\rmin \cup \Sigma_{j,\rmL}^\rmout}\abs{\rf\pp{u_{i_j}\topp j;\wt\tau_j,\wt\rh_j-\wt\rx_j}}\d |u\topp j_{i_j}|\nonumber\\
& \quad\times \prod_{i_m=1}^{n_m}
\int_{\Sigma_{m,\rmL}^\rmin \cup \Sigma_{m,\rmL}^\rmout}\abs{\rf\pp{u_{i_m}\topp m;\wt\tau_m,\wt\rh_m-\wt\rx_m}}
\pp{1+|\xi_{L,i_m}\topp m(u_{i_m}\topp m)|}\d |u\topp m_{i_m}|,\nonumber
\end{align*}
and a similar expression for $\mathfrak D_n(L)$. 
Note that we have canceled the factor $L^{(n_1+\cdots+n_m)/2}$ from \eqref{eq:L} and those from $\d\xi_{i_j}\topp j = 2^{-1/2}L^{-1/4}\d u\topp j_{i_j}$ and $\d\eta_{i_j}\topp j = 2^{-1/2}L^{-1/4}\d v\topp j_{i_j}$. 
Then,
\[
\mathfrak C_n(L)\le C^{n_1+\cdots+n_m} L^{n_m/2}.
\]
 The same upper bound for $\mathfrak D_n(L)$ holds.

Therefore, for every $\epsilon>0$, there exists $C>0$ such that for $L$ large enough, 
\begin{multline*}
\bigg|\oint_{>1} \cdots \oint_{>1}  
\what{\mathsf D}\topp\vvn_{\step,\vvx_L,\vvtau}(\vvz,\vvh_L)
\frac{\d z_1}{2\pi\i z_1(1-z_1)}\cdots\frac{\d z_{m-1}}{2\pi \i z_{m-1}(1-z_{m-1})}\bigg|
\\
\le
n_1^{\frac{n_1}2}\pp{\prodd j1{m-1}(n_j+n_{j+1})^{\frac{n_j+n_{j+1}}2}}n_m^{\frac{n_m}2}  \times C^{n_1+\dots+n_m} \times  L^{n_m/2}\exp\pp{-\frac {4(1-\epsilon)}3 \summ j1m \wt\tau_jn_j L^{3/2}},
\end{multline*}
for all $\vvn\in\N^m\setminus\{\vv1\}$.
We also have, for $L$ large enough,
\begin{align*}
L^{n_m/2}\exp\pp{-\frac {4(1-\epsilon)}3 \summ j1m \wt\tau_jn_jL^{3/2}} & \le \exp\pp{-\frac {4(1-2\epsilon)}3 \summ j1m \wt\tau_jn_jL^{3/2}}\\
&  \le \exp\pp{-\frac {4(1-2\epsilon)}3 \pp{1+\min_{j=1,\dots,m}\wt \tau_j}L^{3/2}}.
\end{align*}
We write $1+\epsilon' \equiv (1-2\epsilon)(1+\min_{j=1,\dots,m}\wt\tau_j)$. Notice that in this way, by taking $\epsilon>0$ small enough, we have $\epsilon'\in(0,\min_{j=1,\dots,m}\wt \tau_j)$ as desired.

It remains to show that the factors involving $n_1,\dots,n_m$ above are summable. For $n_1\le \cdots\le n_m$ we have
\begin{align*}
n_1^{\frac{n_1}2}\pp{\prodd j1{m-1}(n_j+n_{j+1})^{\frac{n_j+n_{j+1}}2}}n_m^{\frac{n_m}2}
& \le 2^{n_2+\cdots+n_m}n_1^{n_1/2}n_2^{n_2}\cdots n_{m-1}^{n_{m-1}} n_m^{3n_m/2}.
\end{align*}
Therefore, by Stirling's formula, with $\vvn! = n_1!\cdots n_m!$,
\begin{multline*}
\sum_{\vvn\in\N^m\setminus\{\vv1\}}  \frac{n_1^{\frac{n_1}2}\pp{\prodd j1{m-1}(n_j+n_{j+1})^{\frac{n_j+n_{j+1}}2}}n_m^{\frac{n_m}2}}{(\vvn!)^2}  \times  C^{n_1+\dots+n_m} \\
 \le Cm!\sum_{1\le n_1\le\cdots\le n_m} C^{n_1+\cdots+n_m}\frac{n_m^{n_m/2}}{\vvn!}<\infty.
\end{multline*}
Combining the above we have proved the desired estimate.

\subsection{Joint convergence in general: with time points not necessarily distinct}\label{sec:ZP}
The following general fact is of its own interest. This section does not require specific laws of the models involved earlier. Let, for each $n\in\N$, $\{Y_n(x,t)\}_{x\in \R, t\in (0,T)}$ be a random field, and we are interested in the convergence of finite-dimensional distributions to another random field $\{Y(x,t)\}_{x\in\R,t\in(0,T)}$, as $n\to\infty$. 
\begin{Lem}\label{lem:bootstrap}
Assume that 
\[
\pp{Y_n(x_i,t_i)}_{i=1,\dots,d}\weakto \ccbb{Y(x_i,t_i)}_{i=1,\dots,d}
\]
for all $d\in\N$, $x_i\in\R,t_i\in(0,T), i=1,\dots,d$, such that all $t_1,\dots,t_d$ are distinct. Assume also that the limit random field $\{Y(x,t)\}_{x\in\R,t\in(0,T)}$ has continuous joint cumulative distribution functions.  Then, $\ccbb{Y_n(x,t)}_{x\in \R,t\in (0,T)}\fddto \ccbb{Y(x,t)}_{x\in \R,t\in(0,T)}$. 
\end{Lem}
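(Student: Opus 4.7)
My plan is to bootstrap from the distinct-time hypothesis via a perturbation argument, approximating a tuple with coincident time coordinates by neighboring tuples whose times are pairwise distinct, and then passing to a double limit. Fix a tuple $(x_i, t_i)_{i=1}^d$ (with some $t_i$'s possibly equal) and a continuity point $\vec h = (h_1, \ldots, h_d)$ of the joint CDF of $(Y(x_i, t_i))_i$. For each $\varepsilon > 0$ I would construct perturbed times $t_i^\varepsilon$ with $|t_i^\varepsilon - t_i| < \varepsilon$ and $t_1^\varepsilon, \ldots, t_d^\varepsilon$ pairwise distinct (for instance, by shifting each cluster of coincident $t_i$'s apart by different small amounts). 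The stated hypothesis then delivers, for every fixed $\varepsilon > 0$,
\[
(Y_n(x_i, t_i^\varepsilon))_{i=1}^d \weakto (Y(x_i, t_i^\varepsilon))_{i=1}^d \quad \text{as } n \to \infty.
\]

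Next, I would send $\varepsilon \downarrow 0$ on the limit side. Continuity of the joint CDFs of $Y$, together with continuity of $Y$ in the time variable (consistent with the stated hypothesis and, in the present application, supplied by sample-path continuity of the KPZ fixed point on $t > 0$), yields
\[
(Y(x_i, t_i^\varepsilon))_{i=1}^d \weakto (Y(x_i, t_i))_{i=1}^d \quad \text{as } \varepsilon \downarrow 0.
\]

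To conclude, I would interchange the $n \to \infty$ and $\varepsilon \downarrow 0$ limits via the standard double-limit lemma of Billingsley (\emph{Convergence of Probability Measures}, Theorem 3.2): if $X_n^\varepsilon \weakto X^\varepsilon$ as $n \to \infty$ for each $\varepsilon > 0$, $X^\varepsilon \weakto X$ as $\varepsilon \downarrow 0$, and the stochastic equicontinuity
\[
\lim_{\varepsilon \downarrow 0} \limsup_{n \to \infty} \proba\!\left(\max_{1 \le i \le d} |Y_n(x_i, t_i^\varepsilon) - Y_n(x_i, t_i)| > \eta\right) = 0 \quad \text{for every } \eta > 0
\]
holds, then $X_n \weakto X$, where $X_n = (Y_n(x_i, t_i))_i$ and $X_n^\varepsilon = (Y_n(x_i, t_i^\varepsilon))_i$. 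The main obstacle is precisely the stochastic equicontinuity condition on the family $\{Y_n\}$ uniform in $n$: it does not follow from the stated FDD hypothesis alone, and in the paper's application it is furnished by the (conditional) continuity in $t$ of the KPZ fixed point sample paths, a regularity property which the abstract lemma is implicitly invoking on the $Y_n$.
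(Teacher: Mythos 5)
Your route via Billingsley's double-limit theorem is a natural first idea, and you are honest about where it breaks: the third condition in that theorem,
\[
\lim_{\epsilon \downarrow 0} \limsup_{n \to \infty} \proba\pp{\max_{1\le i\le d}\abs{Y_n(x_i,t_i^\epsilon)-Y_n(x_i,t_i)} > \eta} = 0,
\]
is a uniform-in-$n$ stochastic equicontinuity statement about the \emph{pre-limit} fields $Y_n$. This is not implied by the lemma's hypotheses (which speak only of convergence of finite-dimensional distributions at distinct times and of continuity of the \emph{limit} field's CDFs), and in the paper's actual application it would amount to proving tightness of the conditioned processes --- exactly the problem the authors explicitly set aside. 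So as written, the proposal does not prove the lemma as stated; it proves a different lemma with an extra hypothesis that is unavailable.

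The paper's own proof avoids this entirely by not attempting to move the argument of $Y_n$ at all. Instead, for a coincident pair (say $t_1=t_2=t$), it sandwiches the target probability between quantities that only ever reference $Y_n$ at the two \emph{distinct} times $t$ and $t+\epsilon$. Concretely, writing $A=\ccbb{Y_n(x_2,t)>y_2}$ and $B=\ccbb{Y_n(x_1,t)>y_1,\,Y_n(x_2,t+\epsilon)>y_2+\delta}$, one has
\[
\proba\pp{Y_n(x_1,t)>y_1,Y_n(x_2,t)>y_2}\ \ge\ \proba(B)-\proba\pp{Y_n(x_2,t)\le y_2,\ Y_n(x_2,t+\epsilon)>y_2+\delta},
\]
and a symmetric upper bound with $y_2-\delta$ in place of $y_2+\delta$. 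Each term on the right involves $Y_n$ only at distinct times, so the hypothesis lets you send $n\to\infty$ \emph{first}; only then does one send $\epsilon,\delta\downarrow 0$, at which point the correction term $\proba\pp{Y(x_2,t)\le y_2,\ Y(x_2,t+\epsilon)>y_2+\delta}$ collapses to zero and the main term recovers $\proba\pp{Y(x_1,t)>y_1,Y(x_2,t)>y_2}$. The crucial point is that the regularity used in the $\epsilon,\delta\downarrow 0$ step is regularity of the \emph{limit} field $Y$ (which is known explicitly in the application), not of the $Y_n$. Your plan would need exactly this kind of one-sided approximation trick to discharge the equicontinuity requirement; absent that, the interchange of limits is not justified.
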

\begin{proof}
We first prove the case $d=2$. We shall prove
\[
\limn \proba\pp{Y_n(x_1,t)> y_1,Y_n(x_2,t)> y_2}  = \proba\pp{Y(x_1,t)> y_1,Y(x_2,t)> y_2}, 
\]
for all $x_1,x_2, y_1,y_2\in\R, t\in(0,T)$. 
First, we write, for $\epsilon\in\R$ such that $t+\epsilon\in(0,T)$ and $\delta>0$,
\begin{align*}
\proba & \pp{Y_n(x_1,t)> y_1,Y_n(x_2,t)> y_2}  
\\
& \ge \proba\pp{Y_n(x_1,t)> y_1,Y_n(x_2,t)> y_2,Y_n(x_2,t+\epsilon)> y_2+\delta}  \\
&  = \proba\pp{Y_n(x_1,t)> y_1,Y_n(x_2,t+\epsilon)> y_2+\delta}  \\
& \quad - \proba\pp{Y_n(x_1,t)> y_1,Y_n(x_2,t)\le y_2,Y_n(x_2,t+\epsilon)> y_2+\delta} \\
& \ge \proba\pp{Y_n(x_1,t)> y_1,Y_n(x_2,t+\epsilon)> y_2+\delta}   - \proba\pp{Y_n(x_2,t)\le y_2,Y_n(x_2,t+\epsilon)> y_2+\delta}.
\end{align*}
Note that in the last expression above, each probability concerns the joint law of random field at distinct time points. Therefore, we have
\begin{multline*}
\liminf_{n\to\infty}\proba  \pp{Y_n(x_1,t)> y_1,Y_n(x_2,t)> y_2}  \\
\ge 
\proba\pp{Y(x_1,t)> y_1,Y(x_2,t+\epsilon)> y_2+\delta}   - \proba\pp{Y(x_2,t)\le y_2,Y(x_2,t+\epsilon)> y_2+\delta}.
\end{multline*}
Letting $\epsilon,\delta\downarrow 0$, by continuity of the joint law  we have
\equh\label{eq:liminf}
\liminf_{n\to\infty}\proba  \pp{Y_n(x_1,t)> y_1,Y_n(x_2,t)> y_2}  \ge 
\proba\pp{Y(x_1,t)> y_1,Y(x_2,t)> y_2}.
\eque
For the other direction, write
\begin{align*}
\proba & \pp{Y_n(x_1,t)> y_1,Y_n(x_2,t)> y_2}  \\
&  = \proba\pp{Y_n(x_1,t)> y_1,Y_n(x_2,t)> y_2, Y_n(x_2,t+\epsilon)> y_2-\delta}  \\
& \quad + \proba\pp{Y_n(x_1,t)> y_1,Y_n(x_2,t)>  y_2,Y_n(x_2,t+\epsilon)\le y_2-\delta} \\
& \le \proba\pp{Y_n(x_1,t)> y_1,Y_n(x_2,t+\epsilon)> y_2-\delta}   + \proba\pp{Y_n(x_2,t)> y_2,Y_n(x_2,t+\epsilon)\le y_2-\delta}.
\end{align*}
Again, first taking $\limsup$ as $n\to\infty$, and then letting $\epsilon,\delta\downarrow0$, we have
\[
\limsup_{n\to\infty}\proba  \pp{Y_n(x_1,t)> y_1,Y_n(x_2,t)> y_2}  \le
\proba\pp{Y(x_1,t)> y_1,Y(x_2,t)> y_2}.
\]
Combining with \eqref{eq:liminf}, we have proved the convergence for $d=2$. For larger $d\in\N$, the proof can be carried out by the same method and induction on the number of $t_i$ that take the same values. The details are omitted.
\end{proof}

\section{Proof for the case with flat initial condition}\label{sec:flat}
Assume that $Z$ is 
a standard normal random variable, and let
 $\B^{\rmbr, Z/\sqrt 2}_1$ be a Brownian bridge with initial value $Z/\sqrt 2$ at $t=0$ and value 0 at $t = 1$, and $\B^{\rmbr,Z/\sqrt 2}_2$ be another process defined similarly, and the two are conditionally independent given $Z$. Equivalently, the two processes can be defined via
 \[
\B^{\rmbr,Z/\sqrt 2}_i(\tau):=\B^{\rmbr}_i(\tau) + \frac{1-\tau}{\sqrt 2} Z, \quad \tau\in[0,1], i=1,2,
 \]
 where $\B^\rmbr_1,\B^\rmbr_2$ are two i.i.d.~standard Brownian bridge, independent from $Z$.
We shall prove the following restatement of Theorem \ref{thm:2} in the case with flat initial condition:
\begin{multline*}
\calL\pp{\ccbb{\frac{\H_\flat(\frac{\rx}{\sqrt 2L^{1/4}},\tau)-\tau\H_\flat(0,1)}{\sqrt 2L^{1/4}} }_{\rx\in\R,\tau\in(0,1)}\mmid  \H_\flat(0,1) = L}\\
 \fddto \calL\pp{\ccbb{\min\ccbb{\B^{\rmbr,Z/\sqrt 2}_1(\tau)+\rx,\B^{\rmbr,-Z/\sqrt 2}_2(\tau)-\rx}}_{\rx\in\R,\tau\in(0,1)}},
\end{multline*}
as $L\to\infty$.

The proof follows the same strategy as in the case of step initial condition. So we only sketch the key calculations. 
We use the same notation for $\vv{\rx},\vv\tau, \vv{\rh}, \vvx_L, \vvh_L$ as in \eqref{eq:h_L,j}.  
Again we prove the case $\tau_0,\dots,\tau_m$ are all distinct and then apply Lemma \ref{lem:bootstrap}. 
This time,
we consider
\[
\proba\pp{\H_\flat(x_{L,j},\tau_j)>h_{L,j}, j=1,\dots,m-1 \mmid \H_\flat(0,1) = L} = \frac{\what Q_{\flat,\vvx_L,\vvtau}(\vvh_L)}{p_{\flat,0,1}(L)}.
\]
The derivation of the expression  of $\what Q_{\flat,\vvx,\vvtau}$ is similar to the one of \eqref{eq:Q conditional} as before following \cite[Proposition 2.9 and Definition 2.26]{liu22multipoint}. More precisely, we have
\[
\what Q_{\flat,\vvx,\vvtau}(\vvh)  :=\proba\pp{\H_\flat(x_j,\tau_j)>h_j, j=1,\dots,m-1, \H_\flat(x_m,\tau_m) = h_m}
 = \sum_{\vvn\in\N_0^m}\frac1{(\vvn!)^2}\what Q\topp\vvn_{\flat,\vvx,\vvtau}(\vvh),
\]
where
\[
\what Q_{\flat,\vvx,\vvtau}\topp\vvn(\vvh) := (-1)^{m-1}\oint_{>1} \cdots \oint_{>1} {\what{\mathsf D}\topp\vvn}_{\flat,\vvx,\vvtau}(\vvz,\vvh)\frac{\d z_1}{2\pi\i z_1(1-z_1)}\cdots\frac{\d z_{m-1}}{2\pi \i z_{m-1}(1-z_{m-1})}, 
\]
with
\begin{align*}
\what{\mathsf D}\topp\vvn_{\flat,\vvx,\vvtau}(\vvz,\vvh) 
&   := \prod_{i_1=1}^{n_1} \int_{C_{1,\rmL}}\frac{\d\xi_{i_1}\topp 1}{2\pi\i}
\int_{C_{1,\rmR}}\frac{\d\eta_{i_1}\topp 1}{2\pi\i}
\det\pp{\ccbb{\delta(-\eta_k\topp 1,\xi_\ell\topp 1)}_{k,\ell=1,\dots,n_1}}
\nonumber\\
& \quad\times  \prodd j2m\Bigg[(1-z_{j-1})^{n_{j-1}}\pp{1-\frac1{z_{j-1}}}^{n_{j}}\\
& \quad\times \prod_{i_j=1}^{n_j}\pp{\frac1{1-z_{j-1}}\int_{C_{j,\rmL}^{\rm in}}\frac{\d\xi_{i_j}\topp j}{2\pi\i}- \frac{z_{j-1}}{1-z_{j-1}}\int_{C_{j,\rmL}^{\rm out}}\frac{\d\xi_{i_j}\topp j}{2\pi\i}}\nonumber\\
& \quad\times \pp{\frac1{1-z_{j-1}}\int_{C_{j,\rmR}^{\rm in}}\frac{\d\eta_{i_j}\topp j}{2\pi\i}- \frac{z_{j-1}}{1-z_{j-1}}\int_{C_{j,\rmR}^{\rm out}}\frac{\d\eta_{i_j}\topp j}{2\pi\i}}\Bigg]\nonumber\\
&\quad\times\wt\Pi_\vvn(\vec{\vv\xi}\topp m,\vec{\vv\eta}\topp m) \prodd j1m \prod_{i_j=1}^{n_j}\frac{f_{\wt x_j,\wt \tau_j}(\xi_{i_j}\topp j,\wt h_j)}{f_{\wt x_j,\wt\tau_j}(\eta_{i_j}\topp j,\wt h_j)},\nonumber
\end{align*}
with
\[
\wt\Pi_\vvn(\vec{\vv\xi}\topp m,\vec{\vv\eta}\topp m) := \Pi_\vvn(\vec{\vv\xi}\topp m,\vec{\vv\eta}\topp m) \times (-1)^{n_1(n_1+1)/2} \frac{\Delta(\vv\xi\topp1;\vv\eta\topp1)}{\Delta(\vv\xi\topp1)\Delta(\vv\eta\topp1)}.
\]
In particular, the same factor $\Pi_\vvn$ in the previous section appears again, and the only difference is the second factor on the right-hand side above.
Moreover, the above multiple integrals assume the additional assumption that
\[
C_{1,\rmL} = -C_{1,\rmR}
\]
(equality as two sets), and the $\delta$ function is such that
\equh\label{eq:delta}
\int_{C_{1,\rmL}}\delta(-\eta,\xi)f(\xi)\frac{\d\xi}{2\pi\i} = f(-\eta),\quad \mfa f\in L^2\pp{C_{1,\rmL},\frac{\d\xi}{2\pi\i}}, \eta\in C_{1,\rmR}.
\eque
Throughout, we follow the same notations as in Section \ref{sec:step}.
Again, we have similarly as before the following:
\begin{enumerate}[(i)]
\item the term $\what Q\topp{\vv1}_{\flat,\vvx_L,\vvtau}(\vvh_L)$ has the desired asymptotic behavior.
\item when $\vvn \in\N_0^m\setminus\N^m$, $\what Q\topp{\vv1}_{\flat,\vvx,\vvtau}(\vvh) = 0$.
\item the remainder is negligible: 
\equh\label{eq:flat remainder}
\sum_{\vvn\in\N^p\setminus\{\vv1\}}\what Q\topp{\vvn}_{\flat,\vvx_L,\vvtau}(\vvh_L) = o\pp{\what Q\topp{\vv1}_{\flat,\vvx_L,\vvtau}(\vvh_L)}.
\eque
\end{enumerate}
The proof for the second fact above is the same as in the proof of Lemma \ref{lem:1}. The proof for \eqref{eq:flat remainder} follows closely the proof of Lemma \ref{lem:3}, and the difference is on the estimates of $\wt\Pi_\vvn$ and also on the estimate of $|\what{\mathsf D}\topp\vvn_{\flat,\vvx_L,\vvtau}(\vvz,\vvh_L)|$ in place of  \eqref{eq:D hat n}, of which the details are omitted. So we arrive at
\[
\frac{\what Q_{\flat,\vvx_L,\vvtau}(\vvh_L)}{p_{\flat}(L)} \sim \frac{\what Q\topp{\vv1}_{\flat,\vvx_L,\vvtau}(\vvh_L)}{p_{\flat}(L)} \mmas L\to\infty,
\]
and we only focus on the  contributing term. 
By \eqref{eq:delta},
\begin{align*}
\what{\mathsf D}\topp{\vv1}_{\flat,\vvx,\vv\tau}(\vvz,\vvh) & =  \int_{C_{1,\rmR}}\frac{\d\eta\topp1}{2\pi\i}\times \prodd j2{m}\Bigg[(1-z_{j-1})\pp{1-\frac1{z_{j-1}}}\\
& \quad\times \pp{\frac1{1-z_{j-1}}\int_{C_{j,\rmL}^\rmin}\frac{\d \xi\topp j}{2\pi\i} - \frac{z_{j-1}}{1-z_{j-1}}\int_{C_{j,\rmL}^\rmout}\frac{\d\xi\topp j}{2\pi \i}}\\
& \quad\times \pp{\frac1{1-z_{j-1}}\int_{C_{j,\rmR}^\rmin}\frac{\d \eta\topp j}{2\pi\i} - \frac{z_{j-1}}{1-z_{j-1}}\int_{C_{j,\rmR}^\rmout}\frac{\d\eta\topp j}{2\pi \i}}\Bigg]\\
& \quad\times \wt \Pi_{\vv1}(\vec\xi\topp m,\vec\eta\topp m)
 \prodd j1m\frac{f_{\wt x_j,\wt \tau_j}(\xi\topp j,\wt h_j)}{f_{\wt x_,\wt\tau_j}(\eta\topp j,\wt h_j)},
\end{align*}
with $\vec\xi\topp m = (\xi\topp 1,\dots,\xi\topp m)$ and $\vec\eta\topp m = (\eta\topp 1,\dots,\eta\topp m)$,
\[
\wt \Pi_{\vv1}(\vec\xi\topp m,\vec\eta\topp m) = \Pi_{\vv1}(\vec\xi\topp m,\vec\eta\topp m) \times(\eta\topp 1- \xi\topp 1),
\]
and the convention 
\[
\xi\topp 1 = -\eta\topp 1.
\]
We have, in place of \eqref{eq:Q_hat_11} for the case with step initial condition,  by the same derivation,
\begin{align*}
\what Q\topp{\vv1}_{\flat,\vvx_L,\vvtau}(\vvh_L) & =  (-1)^{m-1}\oint_{>1} \cdots \oint_{>1} {\what{\mathsf D}\topp{\vv1}}_{\flat,\vvx,\vvtau}(\vvz,\vvh)\frac{\d z_1}{2\pi\i\cdot z_1(1-z_1)}\cdots\frac{\d z_{m-1}}{2\pi \i \cdot z_{m-1}(1-z_{m-1})}\\
& = 
\int_{\vec C_{L,\rmL}^*\times \vec C_{L,\rmR}}
(-1)^{m-1} \wt \Pi_{\vv1}(\vec\xi\topp m,\vec\eta\topp m)
 \prodd j1m\frac{f_{\wt x_{L,j},\wt \tau_j}(\xi\topp j,\wt h_{L,j})}{f_{\wt x_{L,j},\wt\tau_j}(\eta\topp j,\wt h_{L,j})}
\frac{\d\vec\xi^{(m),*}\d\vec\eta\topp m}{(2\pi \i)^{2m-1}},
\end{align*}
with
\begin{align*}
\vec C_{L,\rmL}^* & :=  C_{2,L,\rmL}^\rmout\times\cdots\times C_{m,L,\rmL}^\rmout, \\
\vec C_{L,\rmR} & := C_{1,L,\rmR}\times C_{2,L,\rmR}^\rmout\times\cdots \times C_{m,L,\rmR}^\rmout,
\end{align*}
and $\vec\xi^{(m),*} = (\xi\topp2,\dots,\xi\topp m)$.
We shall work with 
\[
\xi_L\topp j = -\sqrt L + u_j\frac1{\sqrt 2L^{1/4}} \qmand \eta_L\topp j = \sqrt L+v_j\frac1{\sqrt 2L^{1/4}},
\] as before as functions of $u_j, v_j$, respectively.
Write $\vec\Sigma_\rmL^* = \Sigma_{2,\rmL}\times\cdots\times \Sigma_{m,\rmL}$ and $\d\vv u^* = \d u_2\cdots \d u_m$ accordingly. When writing $u_1$ in the sequel we follow the convention $u_1 = -v_1$,  so that several key calculations in the case of step initial condition can be borrowed directly. 

In place of \eqref{eq:Pi_1_rewrite} we have
\begin{align*}
\wt\Pi_1(\vec\xi_L\topp m,\vec\eta_L\topp m)  & = 2^{m-2}L^{m/2-1}   J_L(u_1,\dots,u_m,v_1,\dots,v_m) \times\pp{2\sqrt L+2^{1/2}L^{-1/4}v_1}\\
& \sim 2^{m-1}L^{(m-1)/2}J_L(u_1,\dots,u_m,v_1,\dots,v_m),
\end{align*}
the same formulas and hence asymptotics for $f$ as in \eqref{eq:prod_f_xi}, \eqref{eq:prod_f_eta} hold. This time, $\d\vec\xi^{(m),*}\d\vec\eta\topp m = 2^{-(2m-1)/2}L^{-(2m-1)/4} \d\vvu^*\d\vvv$. Then, in place of \eqref{eq:limit_Q_step1} we arrive at
\begin{align}
\frac{\what Q\topp{\vv1}_{\flat,\vvx_L,\vvtau}(\vvh_L)}{p_{\flat,0,1}(L)} & \sim\frac{\what Q\topp{\vv1}_{\flat,\vvx_L,\vvtau}(\vvh_L)}{(8\pi\sqrt L)^{-1/2}\exp(-\frac 43L^{3/2})} \nonumber\\
& \to 
\sqrt{4\pi}\int_{\vec \Sigma_{\rmL}^*\times \vec \Sigma_\rmR}
(-1)^{m-1}\frac{\prodd j1m \rf(u_j;\wt\tau_j,\wt \rh_j-\wt\rx_j)\rf(v_j;\wt\tau_j,-\wt\rh_j-\wt\rx_j)}{\prodd j1{m-1}(u_j-u_{j+1})(v_j-v_{j+1})}\frac{\d\vvu^*\d\vvv}{(2\pi\i)^{2m-1}},\label{eq:integrand}
\end{align}
as $L\to\infty$. 
Notice that the factorization into two $m$-multiple integrals as in \eqref{eq:limit_Q_step1} no longer holds here. Some extra work is needed. 

To arrive at the desired formula, we first write the integrand of \eqref{eq:integrand} as, for $u_1,\dots,u_m, v_1,\dots,v_m$ fixed,
\begin{align*}
& (-1)^{m-1} \frac{\prodd j1m \rf(u_j;\wt\tau_j,\wt \rh_j-\wt\rx_j)\rf(v_j;\wt\tau_j,-\wt\rh_j-\wt\rx_j)}{\prodd j1{m-1}(u_j-u_{j+1})(v_j-v_{j+1})}\\
& = \frac{\prodd j1m \rf(u_j;\wt\tau_j,0)\rf(v_j;\wt\tau_j,0)}{\prodd j1{m-1}(u_{j+1}-u_{j})(v_j-v_{j+1})} \prodd j1{m-1}\exp\pp{(\rh_j -\rx_j)(-u_{j+1}+u_j)+(-\rh_j-\rx_j)(-v_{j+1}+v_j)}\nonumber\\
& = \int_{\substack{b_j\ge \rh_j-\x_j\\
c_j\le -\rh_j-\rx_j\\
j=1,\dots,m-1}}
\prodd j1m \rf(u_j;\wt\tau_j,0)\rf(v_j;\wt\tau_j,0) \prodd j1{m-1}\exp\pp{b_j(-u_{j+1}+u_j)+c_j(-v_{j+1}+v_j)}\d\vvb\d\vvc\nonumber\\
&  = \int_{\substack{b_j\ge \rh_j-\x_j\\
c_j\le -\rh_j-\rx_j\\
j=1,\dots,m-1}}
\prodd j1m \rf(u_j;\wt\tau_j,\wt b_j)\rf(v_j;\wt\tau_j,\wt c_j) \d\vvb\d\vvc,
\end{align*}
where $\wt b_j = b_j-b_{j-1}, \wt c_j = c_j-c_{j-1}, j=1,\dots,m$ and $b_0 =c_0 =b_m = c_m = 0$  as usual, and  in the first equality we used the fact that ${\rmRe}(u_j-u_{j+1})<0$ and ${\rmRe}(v_j-v_{j+1})>0$.
Next, we recognize
\begin{align}
\int_{\vec\Sigma_\rmL^*\times\vec\Sigma_\rmR}&
\prodd j1m \rf(u_j;\wt\tau_j,\wt b_j)\rf(v_j;\wt\tau_j,\wt c_j) \frac{\d\vvu^*\d\vvv}{(2\pi\i)^{2m-1}}\nonumber\\
& = \int_{\Sigma_{1,\rmR}}\rf(-v_1;\tau_1,b_1)\rf(v_1;\tau_1,c_1)\frac{\d\vvv_1}{2\pi\i} \nonumber\\
& \quad \times\prodd j2m \int_{\Sigma_{j,\rmL}}\rf(u_j;\wt\tau_j,\wt b_j)\frac{\d u_j}{2\pi\i}\times \prodd j2m\int_{\Sigma_{j,\rmR}}\rf(v_j;\wt\tau_j,\wt c_j)\frac{\d v_j}{2\pi\i}\nonumber\\
& = \phi_{2\tau_1}(c_1-b_1)\times \prodd j2m \phi_{\wt\tau_j}(\wt b_j)\times \prodd j2m \phi_{\wt\tau_j}(\wt c_j),\label{eq:3 phi}
\end{align}
where in the last step  for each integral the identity \eqref{eq:aux03}. Also, by the semigroup property $\phi_{2\tau_1}(c_1-b_1) = \int_\R\phi_{\tau_1}(z-b_1)\phi_{\tau_1}(c_1-z)\d z$, \eqref{eq:3 phi} becomes
\begin{align*}
\int_\R &  \phi_{\tau_1}(b_1-z)\prodd j2m \phi_{\wt\tau_j}(\wt b_j) \times \phi_{\tau_1}(c_1-z)\prodd j2m \phi_{\wt\tau_j}(\wt c_j)\d z\\
& = \int_\R\phi_1^2(z) \mathsf p^{\rmbr,z}_{\tau_1,\dots,\tau_{m-1}}(b_1,\dots,b_{m-1})\mathsf p^{\rmbr,z}_{\tau_1,\dots,\tau_{m-1}}(c_1,\dots,c_{m-1})\d z\\
& = \frac1{\sqrt{4\pi}}\int_\R\phi_{1/2}(z) \mathsf p^{\rmbr,z}_{\tau_1,\dots,\tau_{m-1}}(b_1,\dots,b_{m-1})\mathsf p^{\rmbr,z}_{\tau_1,\dots,\tau_{m-1}}(c_1,\dots,c_{m-1})\d z,
\end{align*}
where $\mathsf p^{\rmbr,z}_{\tau_1,\dots,\tau_{m-1}}$ is the conditional joint probability density function of $\B^{\rmbr,Z}$, given $Z = z$, at time points $\tau_1,\dots,\tau_{m-1}$. 

Now, combining all the above, we have shown
\begin{align*}
\lim_{L\to\infty}\frac{\what Q\topp{\vv1}_{\flat,\vvx_L,\vvtau}(\vvh_L)}{p_{\flat}(L)} 
& =
\sqrt{4\pi}\int_{\vec \Sigma_{\rmL}^*\times \vec \Sigma_\rmR}(-1)^{m-1}\frac{\prodd j1m f(u_j;\wt\tau_j,\wt \rh_j-\wt\rx_j)f(v_j;\wt\tau_j,-\wt\rh_j-\wt\rx_j)}{\prodd j1{m-1}(u_j-u_{j+1})(v_j-v_{j+1})}\frac{\d\vvu^*\d\vvv}{(2\pi\i)^{2m-1}}
\\
& = \sqrt{4\pi}\int_{\substack{b_j\ge \rh_j-\x_j\\
c_j\le -\rh_j-\rx_j\\
j=1,\dots,m-1}}\int_{\vec\Sigma_{\rmL}^*\times\vec\Sigma_\rmR}
\prodd j1m \rf(u_j;\wt\tau_j,\wt b_j)\rf(v_j;\wt\tau_j,\wt c_j) \frac{\d\vvu^*\d\vvv}{(2\pi\i)^{2m-1}}\d\vvb\d\vvc\\
& = \proba\pp{\B_1^{\rmbr,Z/\sqrt 2}(\tau_j)\ge  \rh_j-\rx_j, \B_2^{\rmbr,Z/\sqrt 2}(\tau_j)\le -\rh_j-\rx_j, j=1,\dots,m-1}\\\
& = \proba\pp{\B_1^{\rmbr,Z/\sqrt 2}(\tau_j)\ge \rh_j-\rx_j, \B_2^{\rmbr,-Z/\sqrt 2}(\tau_j)\ge \rh_j+\rx_j, j=1,\dots,m-1},
\end{align*}
where the last step follows by symmetry ($-\B^{\rmbr,Z/\sqrt 2}$ has the same law as $\B^{\rmbr,-Z/\sqrt 2}$).



\end{document}